\numberwithin{equation}{section}
\numberwithin{figure}{section}
\theoremstyle{plain}
\newtheorem{thm}{\protect\theoremname}
\theoremstyle{plain}
\newtheorem{lem}[thm]{\protect\lemmaname}
\theoremstyle{definition}
\newtheorem{example}[thm]{\protect\examplename}
\theoremstyle{plain}
\newtheorem{cor}[thm]{\protect\corollaryname}
\newcommand{\harub}{\mathbin{\underline{\ast}}}
\newcommand{\shaub}{\mathbin{\underline{\shuffle}}}
\providecommand{\corollaryname}{Corollary}
  \providecommand{\examplename}{Example}
  \providecommand{\lemmaname}{Lemma}
\providecommand{\theoremname}{Theorem}
\providecommand{\corollaryname}{Corollary}
  \providecommand{\examplename}{Example}
  \providecommand{\lemmaname}{Lemma}
\providecommand{\theoremname}{Theorem}
\providecommand{\corollaryname}{Corollary}
\providecommand{\examplename}{Example}
\providecommand{\lemmaname}{Lemma}
\providecommand{\theoremname}{Theorem}
\begin{document}

\title{A cyclic analogue of multiple zeta values}

\author{Minoru Hirose}
\address[Minoru Hirose]{Faculty of Mathematics, Kyushu University 744, Motooka, Nishi-ku,
Fukuoka, 819-0395, Japan}
\email{m-hirose@math.kyushu-u.ac.jp}

\author{Hideki Murahara}
\address[Hideki Murahara]{Nakamura Gakuen University Graduate School, 5-7-1, Befu, Jonan-ku,
Fukuoka, 814-0198, Japan}
\email{hmurahara@nakamura-u.ac.jp}

\author{Takuya Murakami}
\address[Takuya Murakami]{Graduate School of Mathematics, Kyushu University, 744, Motooka,
Nishi-ku, Fukuoka, 819-0395, Japan}
\email{tak\_mrkm@icloud.com}

\keywords{Multiple zeta values, Multiple zeta-star values, Schur multiple zeta values, Cyclic sum formula, Derivation relation}
\subjclass[2010]{11M32}

\begin{abstract}
We consider a cyclic analogue of multiple zeta values (CMZVs), which has two kinds of expressions; series and integral expression.
We prove an `integral$=$series' type identity for CMZVs. 
By using this identity, we construct two classes of $\mathbb{Q}$-linear relations among CMZVs.
One of them is a generalization of the cyclic sum formula for multiple zeta-star values.
We also give an alternative proof of the derivation relation for multiple zeta values. 
\end{abstract}

\maketitle

\section{\label{sec:Introduction}Introduction}

For $k_{1},\dots,k_{r}\in\mathbb{Z}_{\geq1}$ with $k_{r}\geq2$,
the multiple zeta values (MZVs) and the multiple zeta-star values
(MZSVs) are defined by 
\[
\zeta(k_{1},\dots,k_{r}):=\sum_{0<n_{1}<\cdots<n_{r}}\frac{1}{n_{1}^{k_{1}}\cdots n_{r}^{k_{r}}}
\]
and 
\[
\zeta^{\star}(k_{1},\dots,k_{r}):=\sum_{0<n_{1}\leq\cdots\leq n_{r}}\frac{1}{n_{1}^{k_{1}}\cdots n_{r}^{k_{r}}}.
\]
We say that an index $(k_{1},\dots,k_{r})\in\mathbb{Z}_{\geq1}^{r}$
is admissible if $k_{r}\geq2$.

In \cite{NPY_schur}, Nakasuji-Phuksuwan-Yamasaki gave an integral
expression of ribbon type Schur multiple zeta values, which is a generalization
of the `integral$=$series' identity established by Kaneko-Yamamoto \cite{Kaneko_Yamamoto}.
The first main result of this paper is a cyclic
analogue of their results. Let $s\in\mathbb{Z}_{\geq1}$, $r_{1},\dots,r_{s}\in\mathbb{Z}_{\geq1}$,
and $k_{1,1},\dots,k_{1,r_{1}},\dots,k_{s,1},\dots,k_{s,r_{s}}\in\mathbb{Z}_{\geq1}$.
A multi-index $[(k_{1,1},\dots,k_{1,r_{1}}),\dots,(k_{s,1},\dots,k_{s,r_{s}})]$
is called an admissible cyclic index if 
\begin{itemize}
\item for all $1\leq i\leq s$, the index $(k_{i,1},\dots,k_{i,r_{i}})$
is admissible or equal to $(1)$, 
\item there exists $1\leq i\leq s$ such that $(k_{i,1},\dots,k_{i,r_{i}})\neq(1)$. 
\end{itemize}
For an admissible cyclic index $\Bbbk=[(k_{1,1},\dots,k_{1,r_{1}}),\dots,(k_{s,1},\dots,k_{s,r_{s}})]$,
we define the cyclic multiple zeta value (CMZV) by
\begin{equation}
 \zeta_\mathrm{cyc}(\Bbbk):=\sum_{(n_{1,1},\dots,n_{s,r_{s}})\in S}\prod_{i=1}^{s}\prod_{j=1}^{r_{i}}\frac{1}{n_{i,j}^{k_{i,j}}}\label{eq:series_exp},
\end{equation}
where 
\[
S:=\{(n_{1,1},\dots,n_{s,r_{s}})\in\mathbb{Z}_{\geq1}^{r_{1}+\cdots r_{s}}\mid n_{1,1}<\cdots<n_{1,r_{1}}\geq n_{2,1}<\cdots<n_{2,r_{2}}\geq\cdots\geq n_{s,1}<\cdots<n_{s,r_{s}}\geq n_{1,1}\}.
\]

\begin{thm}[Cyclic integral-series identity]
\label{thm:Cyclic_Integral_Series_identity}Let $\Bbbk=[(k_{1,1},\dots,k_{1,r_{1}}),\dots,(k_{s,1},\dots,k_{s,r_{s}})]$
be an admissible cyclic index. Put $k_{i}=\sum_{j=1}^{r_{i}}k_{i,j}$.
Then we have 
\begin{equation}
\zeta_\mathrm{cyc}(\Bbbk)=\int_{D}\prod_{i=1}^{s}\prod_{j=1}^{k_{i}}a_{i,j}dt_{i,j}\label{eq:integral_exp},
\end{equation}
where 
\[
a_{i,j}:=\begin{cases}
\frac{1}{1-t_{i,j}} & j\in\{1,k_{i,1}+1,\dots,k_{i,1}+\cdots+k_{i,r_{i}-1}+1\},\\
\frac{1}{t_{i,j}} & \text{otherwise}
\end{cases}
\]
and 
\[
D:=\{(t_{1,1},\dots,t_{s,k_{s}})\in(0,1)^{k_{1}+\cdots+k_{s}}\mid t_{1,1}<\cdots<t_{1,k_{1}}>t_{2,1}<\cdots<t_{2,k_{2}}>\cdots>t_{s,1}<\cdots<t_{s,k_{s}}>t_{1,1}\}.
\]
\end{thm}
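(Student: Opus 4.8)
The plan is to separate the cyclic combinatorics from the zeta values by a polytope decomposition. First I would recognize $D$ as the interior of an order polytope: arrange chains $C_{1},\dots,C_{s}$ with $|C_{i}|=k_{i}$ into a poset $\mathcal{P}$ by imposing $\max C_{i}>\min C_{i+1}$ cyclically ($i$ modulo $s$), so that $D$ is the interior of the order polytope of $\mathcal{P}$. Here ``$\mathcal{P}$ is a genuine poset'' (equivalently, $D\neq\varnothing$) is exactly the content of the second admissibility bullet: if every block equalled $(1)$ then $\max C_{i}=\min C_{i}$, and the relations $\min C_{i+1}<\max C_{i}=\min C_{i}$ would close into a directed cycle, whereas a single block with $\min C_{i_{0}}<\max C_{i_{0}}$ rules out any such cycle. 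Granting this, $D=\bigsqcup_{\sigma}\Delta_{\sigma}$ up to a null set, with $\sigma$ ranging over the linear extensions of $\mathcal{P}$ and $\Delta_{\sigma}=\{0<t_{\sigma(1)}<\dots<t_{\sigma(K)}<1\}$, $K=k_{1}+\dots+k_{s}$; hence $\int_{D}\prod_{i,j}a_{i,j}\,dt_{i,j}=\sum_{\sigma}\int_{\Delta_{\sigma}}\prod_{i,j}a_{i,j}\,dt_{i,j}$.

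Next I would evaluate each simplex integral. On $\Delta_{\sigma}$ it is an ordinary one-variable iterated integral whose $0/1$-word $\varepsilon^{\sigma}\in\{0,1\}^{K}$ is the list of forms $a_{i,j}$ reordered along $\sigma$, where $\tfrac{dt}{t}$ reads as $0$ and $\tfrac{dt}{1-t}$ as $1$. The first letter of $\varepsilon^{\sigma}$ is $1$, since the $\sigma$-minimal variable is $\mathcal{P}$-minimal, hence is some $\min C_{j}=t_{j,1}$, which carries $\tfrac{dt}{1-t}$. The last letter is $0$, since the $\sigma$-maximal variable is $\mathcal{P}$-maximal, hence is $\max C_{i}=t_{i,k_{i}}$ for some block $i\neq(1)$ (a block $(1)$ lies below $\max C_{i-1}$, so it is never $\mathcal{P}$-maximal), and the first admissibility bullet then forces $k_{i,r_{i}}\geq 2$, so $t_{i,k_{i}}$ carries $\tfrac{dt}{t}$. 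Thus the integral converges and equals $\zeta(\mathbf{c}^{\sigma})$ for the admissible index $\mathbf{c}^{\sigma}$ read off $\varepsilon^{\sigma}$, whence $\int_{D}\prod_{i,j}a_{i,j}\,dt_{i,j}=\sum_{\sigma}\zeta(\mathbf{c}^{\sigma})$.

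Then I would expand the series side to match. The inequalities defining $S$ are strict within each block and weak between blocks; resolving every weak inequality, and every remaining between-block incomparability, into strict ones decomposes $\zeta_\mathrm{cyc}(\Bbbk)$ into a sum of ordinary multiple zeta values. The link between the two expansions is the classical ``series $=$ iterated integral'' identity applied one run at a time, $1\,0^{c-1}\leftrightarrow n^{-c}$: a linear extension of $\mathcal{P}$ is the same datum as an interleaving of the runs of the $s$ blocks, and following this through shows that the multiple zeta values produced by the resolutions of $S$ again sum to $\sum_{\sigma}\zeta(\mathbf{c}^{\sigma})$. Combined with the previous step, this gives the theorem.

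The hard part will be this last comparison, which is not a termwise bijection: $\mathcal{P}$ has $\sum_{i}k_{i}$ elements while $S$ naturally lives on the $\sum_{i}r_{i}$ run-starts, so when a word $\varepsilon^{\sigma}$ is reordered its ``$0$'' letters get redistributed among the run-start summation variables and the index $\mathbf{c}^{\sigma}$ depends on $\sigma$ in a delicate way. One has to set up carefully the correspondence between linear extensions of $\mathcal{P}$ (together with their indices $\mathbf{c}^{\sigma}$) and the resolutions of $S$ (together with theirs), and check that the two resulting families of multiple zeta values coincide. A second possible route, bypassing the linear-extension combinatorics, would be a Seki--Yamamoto style connected-sum argument: cut the cyclic configuration at two valleys, interpolate between the fully integrated and the fully summed expression through connected sums joined by beta-function connectors at the two cuts, and verify the one-step transport relations; I would expect the polytope decomposition to be the more transparent of the two.
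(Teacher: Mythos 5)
Your reduction of the integral side is correct as far as it goes: $D$ is indeed the open order polytope of the poset $\mathcal{P}$ obtained by joining the $s$ chains via $\min C_{i+1}<\max C_{i}$ cyclically, the second admissibility bullet is exactly what prevents a directed cycle (so $D\neq\varnothing$), the decomposition of $D$ into simplices over linear extensions is valid up to a null set, and your check that each word $\varepsilon^{\sigma}$ begins with $\tfrac{dt}{1-t}$ and ends with $\tfrac{dt}{t}$ (using the first admissibility bullet) correctly gives convergence and $\int_{D}=\sum_{\sigma}\zeta(\mathbf{c}^{\sigma})$. But the step you defer to the end --- showing that $\sum_{\sigma}\zeta(\mathbf{c}^{\sigma})$ equals the sum of MZVs obtained by resolving the weak inequalities and incomparabilities in $S$ --- is not a technical loose end; it \emph{is} the theorem. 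An ``integral $=$ series'' identity asserts precisely that a shuffle-type decomposition (linear extensions of a $\sum_i k_i$-element poset) and a quasi-shuffle-type decomposition (total orderings with equalities of the $\sum_i r_i$ summation variables) of two a priori unrelated objects produce the same aggregate of multiple zeta values, and as you yourself note the two families do not match termwise. You give no mechanism for this comparison, so the proposal as written has a genuine gap at its core; the polytope bookkeeping surrounding it, while correct, carries none of the load.

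For comparison, the paper does not attack this combinatorial matching directly. It takes as input the Nakasuji--Phuksuwan--Yamasaki integral expression for ribbon-type Schur MZVs --- which is exactly the ``open-ended'' (non-cyclic) version of the identity, i.e.\ the same hard content you left unproved, but already available in the literature --- and then runs a short induction on $s$: splitting the ribbon domain $D'$ according to $t_{s,k_s}\gtrless t_{1,1}$ and the ribbon sum $S'$ according to $n_{s,r_s}\gtrless n_{1,1}$ expresses the ribbon object as (cyclic object with $s$ blocks) plus (cyclic object with $s-1$ blocks, the blocks $\Bbbk_s$ and $\Bbbk_1$ concatenated), on both the integral and the series side; the induction hypothesis and the ribbon identity then force the two $s$-block expressions to agree, with the $s=1$ case being the classical iterated-integral representation of an MZV. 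If you want to salvage your route, you either need to actually construct the correspondence between linear extensions and resolutions (in effect reproving the Kaneko--Yamamoto/NPY identity), carry out the connected-sum argument you sketch as an alternative, or, most economically, cite the ribbon-type identity and reduce to it as the paper does.
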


We call (\ref{eq:series_exp}) (resp. (\ref{eq:integral_exp})) as
series (resp. integral) expression of $\zeta_\mathrm{cyc}(\Bbbk)$.

The second and third main theorems (Theorems \ref{thm:CYC-1} and \ref{thm:CYC-2}) are two classes of $\mathbb{Q}$-linear relations among CMZVs. Theorem \ref{thm:CYC-1} is a generalization of the cyclic sum formula for MZSVs which was proved by Ohno-Wakabayashi \cite{Ohno_Wakabayashi}.
We recall Hoffman's algebraic setup with a slightly different convention (see \cite{hoffman_97}).
We put $\mathfrak{h}:=\mathbb{Q}\langle x,y\rangle$.
We define subspaces $\mathfrak{h}_{C},\mathfrak{h}^{0},\mathfrak{h}^{1},\mathfrak{h}_{C}^{0}$
and $\mathfrak{h}_{C}^{1}$ by $\mathfrak{h}_{C}:=\mathfrak{h}x\oplus\mathfrak{h}y$,
$\mathfrak{h}^{0}:=\mathbb{Q}\oplus y\mathfrak{h}x$, $\mathfrak{h}^{1}:=\mathbb{Q}\oplus y\mathfrak{h}$,
$\mathfrak{h}_{C}^{0}:=\mathfrak{h}^{0}\cap\mathfrak{h}_{C}$ and
$\mathfrak{h}_{C}^{1}:=\mathfrak{h}^{1}\cap\mathfrak{h}_{C}$. 
Put $z_{k}:=yx^{k-1}$ for $k\in\mathbb{Z}_{\geq1}$. 
We denote by $\mathfrak{h}^\mathrm{cyc}$ the subspace of $\oplus_{s=1}^{\infty}\mathfrak{h}^{\otimes s}$ 
spanned by 
\[
\bigcup_{s=1}^{\infty}\{u_{1}\otimes\cdots\otimes u_{s}\in\mathfrak{h}^{\otimes s}\mid u_{1},\dots,u_{s}\in\mathfrak{h}_{C}^{0}\cup\{y\}\ \text{and there exists }j\ \text{such that }u_{j}\neq y\}.
\]
We define a $\mathbb{Q}$-linear map $Z_\mathrm{cyc}:\mathfrak{h}^\mathrm{cyc}\to\mathbb{R}$
by 
\[
 Z_\mathrm{cyc}(z_{k_{1,1}}\cdots z_{k_{1,r_{1}}}\otimes\cdots\otimes z_{k_{s,1}}\cdots z_{k_{s,r_{s}}})
 =\zeta_\mathrm{cyc}([(k_{1,1},\dots,k_{1,r_{1}}),\dots,(k_{s,1},\dots,k_{s,r_{s}})]).
\]

\begin{thm}
\label{thm:CYC-1}For $u_{1}\otimes\cdots\otimes u_{s}\in\mathfrak{h}^\mathrm{cyc}$,
we have 
\begin{multline*}
 \sum_{i=1}^{s}Z_\mathrm{cyc}(u_{1}\otimes\cdots\otimes u_{i-1}\otimes(y\shaub u_{i})\otimes u_{i+1}\otimes\cdots\otimes u_{s})
 =\sum_{i=1}^{s}Z_{\mathrm{cyc}}(u_{1}\otimes\cdots\otimes u_{i}\otimes y\otimes u_{i+1}\otimes\cdots\otimes u_{s}),
\end{multline*}
where $y\shaub u_{i}=y\shuffle u_{i}-yu_{i}-u_{i}y$ (see Section \ref{subsec:inner-shuffle-product} for the general definition of $\shaub$). 
\end{thm}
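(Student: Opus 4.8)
The plan is to prove the identity on the integral side. By Theorem~\ref{thm:Cyclic_Integral_Series_identity} every CMZV occurring on either side of the asserted equality can be written as an integral over a cyclic domain, and I will show that both sides equal one and the same such integral, obtained from $\zeta_{\mathrm{cyc}}(\Bbbk)$ by inserting a single extra variable. The first, purely combinatorial, observation is the shape of the inner shuffle: writing $u_i=w_1\cdots w_{k_i}$ with each $w_m\in\{x,y\}$ (so $w_1=y$, and $w_{k_i}=x$ unless $u_i=y$), one has
\[
 y\shaub u_i=\sum_{l=1}^{k_i-1}w_1\cdots w_l\,y\,w_{l+1}\cdots w_{k_i},
\]
i.e. $y\shaub u_i$ is the sum of the words obtained by inserting one letter $y$ into each \emph{internal} slot of $u_i$; this is immediate from $y\shuffle u_i=\sum_{l=0}^{k_i}(\text{insertion at slot }l)$ together with the definition of $\shaub$, since slot $0$ gives $yu_i$ and slot $k_i$ gives $u_iy$. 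Each word on the right again lies in $\mathfrak{h}_{C}^{0}$, so both sides of the identity are finite sums of bona fide CMZVs.

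For the geometric dictionary, note that the variables $t_{i,j}$ of the integral representation of $\zeta_{\mathrm{cyc}}(\Bbbk)$ sit on a cycle of inequalities whose edges are either \emph{ascending}, $t_{i,l}<t_{i,l+1}$ inside a block, or \emph{descending}, $t_{i,k_i}>t_{i+1,1}$ between consecutive blocks (indices read modulo $s$). Applying Theorem~\ref{thm:Cyclic_Integral_Series_identity} to the index obtained by inserting $y$ into the $l$-th internal slot of $u_i$, one sees this equals the integral of $\bigl(\prod a_{i,j}\,dt_{i,j}\bigr)\cdot\frac{dv}{1-v}$ over the domain $D$ enlarged by one extra variable $v$ subject to the single extra constraint $t_{i,l}<v<t_{i,l+1}$ --- all other variables, inequalities and forms being unchanged, because inserting a letter $y$ changes neither the other letters' status as head of a $z$-block nor, hence, the forms $a_{i,j}$. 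Likewise $Z_{\mathrm{cyc}}(u_1\otimes\cdots\otimes u_i\otimes y\otimes u_{i+1}\otimes\cdots\otimes u_s)$ equals that same integral with $v$ constrained instead by $t_{i+1,1}<v<t_{i,k_i}$. Performing the $v$-integration with the remaining variables held fixed gives $\int_{t_{i,l}}^{t_{i,l+1}}\frac{dv}{1-v}=\log(1-t_{i,l})-\log(1-t_{i,l+1})$ in the first case and $\log(1-t_{i+1,1})-\log(1-t_{i,k_i})$ in the second.

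Summing over all insertion slots, the left-hand side becomes $\int_D\bigl(\prod a_{i,j}\,dt_{i,j}\bigr)\sum_{i=1}^{s}\sum_{l=1}^{k_i-1}\bigl(\log(1-t_{i,l})-\log(1-t_{i,l+1})\bigr)$, whose inner sum telescopes to $\log(1-t_{i,1})-\log(1-t_{i,k_i})$; the right-hand side becomes $\int_D\bigl(\prod a_{i,j}\,dt_{i,j}\bigr)\sum_{i=1}^{s}\bigl(\log(1-t_{i+1,1})-\log(1-t_{i,k_i})\bigr)$, and the cyclic reindexing $i+1\mapsto i$ of the first summand turns this into $\int_D\bigl(\prod a_{i,j}\,dt_{i,j}\bigr)\sum_{i=1}^{s}\bigl(\log(1-t_{i,1})-\log(1-t_{i,k_i})\bigr)$ as well. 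Hence the two sides are equal. I expect the only delicate point is the dictionary of the second paragraph --- checking carefully that ``insert a $y$'' leaves the form assignment of Theorem~\ref{thm:Cyclic_Integral_Series_identity} for all the remaining letters intact and replaces $D$ by exactly $D$ together with the single new inequality --- together with the remark that each telescoping difference $\log(1-t_{i,l})-\log(1-t_{i,l+1})$ must be kept as one unit, since the individual pieces $\int_D\bigl(\prod a_{i,j}\,dt_{i,j}\bigr)\log(1-t_{i,l})$ may diverge.
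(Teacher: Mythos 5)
Your proof is correct and follows essentially the same route as the paper's: both pass to the integral representation of Theorem \ref{thm:Cyclic_Integral_Series_identity} and reduce the identity to a pointwise statement on $D$ about an auxiliary variable ranging over within-block versus between-block intervals of the cycle, which closes up under cyclic reindexing. The only cosmetic difference is that you evaluate the auxiliary integral to logarithms and telescope over explicit $y$-insertions, whereas the paper keeps it as $\int_{0}^{1}E(\cdot,\cdot,t)\,\frac{dt}{1-t}$ and invokes Lemma \ref{lem:inner_shuffle_property} for the effect of $\shaub$.
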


\begin{thm} \label{thm:CYC-2}
 For $u_{1}\otimes\cdots\otimes u_{s}\in\mathfrak{h}^\mathrm{cyc}$
 and $k\in\mathbb{Z}_{\geq1}$, we have 
 \[
  \sum_{i=1}^{s}Z_{\mathrm{cyc}}(u_{1}\otimes\cdots\otimes u_{i-1}\otimes(z_{k}\harub u_{i})\otimes u_{i+1}\otimes\cdots\otimes u_{s})
  =\sum_{i=1}^{s}Z_{\mathrm{cyc}}(u_{1}\otimes\cdots\otimes u_{i}\otimes z_{k}\otimes u_{i+1}\otimes\cdots\otimes u_{s}),
 \]
 where $z_{k}\harub u_{i}=z_{k}*u_{i}-z_{k}u_{i}-u_{i}z_{k}$ 
 (see Section \ref{subsec:inner-harmonic-product} for the general definition of $\harub$). 
\end{thm}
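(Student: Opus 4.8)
The plan is to prove Theorem \ref{thm:CYC-2} entirely on the series side, reducing it to an elementary counting identity for cyclically overlapping intervals. First I would make $z_k\harub u_i$ explicit: writing $u_i=z_{a_1}\cdots z_{a_{r_i}}$, the harmonic product $z_k*u_i$ is the sum of the $r_i+1$ words obtained by inserting the single letter $z_k$ into a gap of $u_i$, together with the $r_i$ words obtained by contracting $z_k$ against one letter, $z_{a_j}\mapsto z_{k+a_j}$; subtracting $z_ku_i$ and $u_iz_k$ removes precisely the leftmost and rightmost insertions, leaving the $r_i-1$ insertions into interior gaps and the $r_i$ contractions. Every word left ends in $z_{a_{r_i}}$ or in $z_{k+a_{r_i}}$, and since $u_i\in\mathfrak{h}_C^0\cup\{y\}$ we have $a_{r_i}\ge2$ unless $u_i=y$ (in which case $r_i=1$ and $z_k\harub y=z_{k+1}$); hence $z_k\harub u_i$ is a $\mathbb{Q}$-linear combination of admissible words, none of them $y$, so every tensor on the left-hand side again lies in $\mathfrak{h}^{\mathrm{cyc}}$, as does every tensor on the right-hand side (which merely inserts a new block $z_k\in\mathfrak{h}_C^0\cup\{y\}$). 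In particular both sides are finite combinations of convergent CMZVs, and since each CMZV series has non-negative terms the rearrangements below are legitimate.

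Next I would read these operations off the series expression (\ref{eq:series_exp}). Fix $(n_{i,j})\in S$ and a block index $i$. Inserting $z_k$ into the gap between $z_{a_j}$ and $z_{a_{j+1}}$ of block $i$ adjoins a summation variable $m$ with $n_{i,j}<m<n_{i,j+1}$ and multiplies the summand by $m^{-k}$; contracting $z_k$ against $z_{a_j}$ sets $m=n_{i,j}$ with the same extra factor $m^{-k}$; and the inter-block inequalities of $S$ are unaffected, since they involve only $n_{i,1}$ and $n_{i,r_i}$. As $j$ ranges over all interior gaps and all positions, $m$ ranges exactly once over the integers in $[n_{i,1},n_{i,r_i}]$. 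Hence, for this $i$,
\[
 Z_{\mathrm{cyc}}(u_1\otimes\cdots\otimes(z_k\harub u_i)\otimes\cdots\otimes u_s)
 =\sum_{(n_{i,j})\in S}\left(\prod_{i,j}\frac{1}{n_{i,j}^{k_{i,j}}}\right)\sum_{m=n_{i,1}}^{n_{i,r_i}}\frac{1}{m^k},
\]
whereas inserting $z_k$ as a new one-entry block between $u_i$ and $u_{i+1}$ (indices modulo $s$) adjoins $m$ with $n_{i+1,1}\le m\le n_{i,r_i}$, so that $Z_{\mathrm{cyc}}(u_1\otimes\cdots\otimes u_i\otimes z_k\otimes u_{i+1}\otimes\cdots\otimes u_s)$ is given by the same expression with $\sum_{m=n_{i+1,1}}^{n_{i,r_i}}$ replacing $\sum_{m=n_{i,1}}^{n_{i,r_i}}$.

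Summing over $i=1,\dots,s$ and pulling the finite sum $\sum_i$ inside the non-negative double series, Theorem \ref{thm:CYC-2} reduces to the assertion that, for every $(n_{i,j})\in S$ and every integer $m$,
\[
 \#\{\,i:n_{i,1}\le m\le n_{i,r_i}\,\}=\#\{\,i:n_{i+1,1}\le m\le n_{i,r_i}\,\}.
\]
This is a statement about the closed intervals $I_i:=[n_{i,1},n_{i,r_i}]$: one has $n_{i,1}\le n_{i,r_i}$, and the wrap-around inequality defining $S$ gives $n_{i,r_i}\ge n_{i+1,1}$, so $[n_{i+1,1},n_{i,r_i}]$ is the overlap of $I_i$ with $I_{i+1}$. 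I would prove it by writing the difference of the two counts as $\sum_i\mathbf 1[m\le n_{i,r_i}]\,(\mathbf 1[m\ge n_{i,1}]-\mathbf 1[m\ge n_{i+1,1}])$, applying a cyclic reindexing (Abel summation) to rewrite it as $\sum_i\mathbf 1[m\ge n_{i,1}]\,(\mathbf 1[m\le n_{i,r_i}]-\mathbf 1[m\le n_{i-1,r_{i-1}}])$, and then observing that whenever the second factor is non-zero one has $m>\min(n_{i,r_i},n_{i-1,r_{i-1}})\ge n_{i,1}$, so the first factor equals $1$ there; the remaining sum $\sum_i(\mathbf 1[m\le n_{i,r_i}]-\mathbf 1[m\le n_{i-1,r_{i-1}}])$ telescopes cyclically to $0$.

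I expect the only genuinely non-routine step to be this interval-counting lemma, in particular getting the degenerate cases right — $s=1$, blocks equal to $y$, consecutive blocks sharing an endpoint, and $k=1$ (so the inserted block is $y$) — but the argument above treats all of them uniformly, using only the two inequalities $n_{i,1}\le n_{i,r_i}$ and $n_{i,r_i}\ge n_{i+1,1}$ that are built into $S$. I would therefore state and prove the lemma first, record the two series identities above, and conclude in a single line.
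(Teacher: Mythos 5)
Your proof is correct and follows essentially the same route as the paper: both reduce the identity, via the series expression, to the observation that $z_{k}\harub u_{i}$ (resp.\ a newly inserted block $z_{k}$) contributes an extra factor $\sum_{m}m^{-k}$ over $n_{i,1}\leq m\leq n_{i,r_{i}}$ (resp.\ $n_{(i+1\bmod s),1}\leq m\leq n_{i,r_{i}}$), and then to the cyclic counting identity $\sum_{i}E(n_{i,1},n_{i,r_{i}},m)=\sum_{i}E(n_{(i+1\bmod s),1},n_{i,r_{i}},m)$. The only differences are that you establish the $r=1$ case of Lemma \ref{lem:inner_harmonic_property} by direct enumeration of the harmonic product and give the telescoping argument for the counting identity, both of which the paper leaves implicit.
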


This paper is organized as follows. 
In Sections \ref{sec:proof-of-cyclic-integral-series-identity},
\ref{sec:proof-of-CYC1} and \ref{sec:Proof-of-CYC2}, 
we give proofs of Theorems \ref{thm:Cyclic_Integral_Series_identity}, \ref{thm:CYC-1}
and \ref{thm:CYC-2}, respectively. 
In Section \ref{sec:applications-of-Theorems}, 
we give an alternative proof of the cyclic sum formula for MZSVs (see \cite{Ohno_Wakabayashi}), 
the derivation relation for MZVs (see \cite{ihara_kaneko_zagier_2006}) 
and the sum formula for MZVs as applications of Theorems \ref{thm:Cyclic_Integral_Series_identity} and \ref{thm:CYC-1}.

\section{\label{sec:proof-of-cyclic-integral-series-identity}Proof of cyclic
integral-series identity}
\subsection{Nakasuji-Phuksuwan-Yamasaki's integral-series identity for ribbon
type Schur MZVs}
For the proof of the cyclic integral-series identity, let us introduce the notion of ribbon type Schur MZVs. 
Let 
\[
 \Bbbk=[(k_{1,1},\dots,k_{1,r_{1}}),\dots,(k_{s,1},\dots,k_{s,r_{s}})]
\]
be an admissible cyclic index with $(k_{1,1},\dots,k_{1,r_{1}})\neq (1)$. 
Put $k_{i}=\sum_{j=1}^{r_{i}}k_{i,j}$.
Then the ribbon type Schur MZV $\zeta_{\mathrm{ribbon}}(\Bbbk)$ is defined by 
\[
 \sum_{(n_{1,1},\dots,n_{s,r_{s}})\in S'}\prod_{i=1}^{s}\prod_{j=1}^{r_{i}}\frac{1}{n_{i,j}^{k_{i,j}}},
\]
where 
\[
 S':=\{(n_{1,1},\dots,n_{s,r_{s}})\in\mathbb{Z}_{\geq1}^{r_{1}+\cdots r_{s}}\mid n_{1,1}<\cdots<n_{1,r_{1}}\geq n_{2,1}<\cdots<n_{2,r_{2}}\geq\cdots\geq n_{s,1}<\cdots<n_{s,r_{s}}\}.
\]
In \cite[Section 6.1]{NPY_schur}, Nakasuji-Phuksuwan-Yamasaki gave
a following integral expression: 
\begin{equation}
 \zeta_{\mathrm{ribbon}}(\Bbbk)=\int_{D'}\prod_{i=1}^{s}\prod_{j=1}^{k_{i}}a_{i,j}dt_{i,j}\label{eq:integral_exp_ribbon},
\end{equation}
where 
\[
 a_{i,j}:=
 \begin{cases}
  \frac{1}{1-t_{i,j}} & j\in\{1,k_{i,1}+1,\dots,k_{i,1}+\cdots+k_{i,r_{i}-1}+1\},\\
  \frac{1}{t_{i,j}} & \text{otherwise}
 \end{cases}
\]
and 
\[
 D':=\{(t_{1,1},\dots,t_{s,k_{s}})\in(0,1)^{k_{1}+\cdots+k_{s}}\mid t_{1,1}<\cdots<t_{1,k_{1}}>t_{2,1}<\cdots<t_{2,k_{2}}>\cdots>t_{s,1}<\cdots<t_{s,k_{s}}\}.
\]
Note that $S=S'\cap\{n_{s,r_{s}}\geq n_{1,1}\}$ and $D=D'\cap\{t_{s,k_{s}}>t_{1,1}\}$.

\subsection{Proof of cyclic integral-series identity}
In this section, we prove Theorem \ref{thm:Cyclic_Integral_Series_identity}.
Let 
\[
 \Bbbk=[(k_{1,1},\dots,k_{1,r_{1}}),\dots,(k_{s,1},\dots,k_{s,r_{s}})]
\]
be an admissible cyclic index. Put $\Bbbk_{i}=(k_{i,1},\dots,k_{i,r_{i}})$ and $k_{i}=\sum_{j=1}^{r_{i}}k_{i,j}$.
We denote by $\zeta_\mathrm{cycint}(\Bbbk)$ the integral
expression appeared in Theorem \ref{thm:Cyclic_Integral_Series_identity}.
We prove $\zeta_\mathrm{cyc}(\Bbbk)=\zeta_\mathrm{cycint}(\Bbbk)$
by induction on $s$. Without loss of generality we can assume $\Bbbk_{1}\neq (1)$.
The case $s=1$ is just a usual integral expression
of a multiple zeta value. Note that we have 
\begin{align*}
 & \{(t_{1,1},\dots,t_{s,k_{s}})\in(0,1)^{k_{1}+\cdots+k_{s}}\mid t_{1,1}<\cdots<t_{1,k_1}>t_{2,1}<\cdots<t_{2,k_{2}}>\cdots>t_{s,1}<\cdots<t_{s,k_{s}}\}\\
= & \{(t_{1,1},\dots,t_{s,k_{s}})\in(0,1)^{k_{1}+\cdots+k_{s}}\mid t_{1,1}<\cdots<t_{1,k_1}>t_{2,1}<\cdots<t_{2,k_{2}}>\cdots>t_{s,1}<\cdots<t_{s,k_{s}}\geq t_{1,1}\}\\
 & \sqcup\{(t_{1,1},\dots,t_{s,k_{s}})\in(0,1)^{k_{1}+\cdots+k_{s}}\mid t_{1,1}<\cdots<t_{1,k_1}>t_{2,1}<\cdots<t_{2,k_{2}}>\cdots>t_{s,1}<\cdots<t_{s,k_{s}}<t_{1,1}\}.
\end{align*}
Thus from (\ref{eq:integral_exp_ribbon}), 
\begin{equation}
\zeta_\mathrm{ribbon}([\Bbbk_{1},\dots,\Bbbk_{s}])=\zeta_\mathrm{cycint}([\Bbbk_{1},\dots,\Bbbk_{s}])+\zeta_\mathrm{cycint}([\Bbbk_{s}\Bbbk_{1},\Bbbk_{2},\dots,\Bbbk_{s-1}]).\label{eq:eq1_in_proof_of_cyc_int_ser_identity}
\end{equation}
Here we denote by $\Bbbk_{s}\Bbbk_{1}$ the concatenation of $\Bbbk_{s}$ and $\Bbbk_{1}$, i.e. $\Bbbk_{s}\Bbbk_{1}:=(k_{s,1},\dots,k_{s,r_{s}},k_{1,1},\dots,k_{1,r_{1}})$.
From 
\begin{align*}
 & \{(n_{1,1},\dots,n_{s,r_{s}})\in\mathbb{Z}_{\geq1}^{r_{1}+\cdots r_{s}}\mid n_{1,1}<\cdots<n_{1,r_{1}}\geq n_{2,1}<\cdots<n_{2,r_{2}}\geq\cdots\geq n_{s,1}<\cdots<n_{s,r_{s}}\}\\
= & \{(n_{1,1},\dots,n_{s,r_{s}})\in\mathbb{Z}_{\geq1}^{r_{1}+\cdots r_{s}}\mid n_{1,1}<\cdots<n_{1,r_{1}}\geq n_{2,1}<\cdots<n_{2,r_{2}}\geq\cdots\geq n_{s,1}<\cdots<n_{s,r_{s}}\geq n_{1,1}\}\\
 & \sqcup\{(n_{1,1},\dots,n_{s,r_{s}})\in\mathbb{Z}_{\geq1}^{r_{1}+\cdots r_{s}}\mid n_{1,1}<\cdots<n_{1,r_{1}}\geq n_{2,1}<\cdots<n_{2,r_{2}}\geq\cdots\geq n_{s,1}<\cdots<n_{s,r_{s}}<n_{1,1}\},
\end{align*}
we have
\begin{equation}
 \zeta_\mathrm{ribbon}([\Bbbk_{1},\dots,\Bbbk_{s}])=\zeta_\mathrm{cyc}([\Bbbk_{1},\dots,\Bbbk_{s}])
 +\zeta_\mathrm{cyc}([\Bbbk_{s}\Bbbk_{1},\Bbbk_{2},\dots,\Bbbk_{s-1}]). \label{eq:eq2_in_proof_of_cyc_int_ser_identity}
\end{equation}
From the induction hypothesis, we have 
\begin{equation}
 \zeta_\mathrm{cycint}([\Bbbk_{s}\Bbbk_{1},\Bbbk_{2},\dots,\Bbbk_{s-1}]) 
 =\zeta_\mathrm{cyc}([\Bbbk_{s}\Bbbk_{1},\Bbbk_{2},\dots,\Bbbk_{s-1}]).\label{eq:eq3_in_proof_of_cyc_int_ser_identity}
\end{equation}
From (\ref{eq:eq1_in_proof_of_cyc_int_ser_identity}), (\ref{eq:eq2_in_proof_of_cyc_int_ser_identity}),
(\ref{eq:eq3_in_proof_of_cyc_int_ser_identity}), we have 
\begin{equation}
\zeta_{{\rm cycint}}([\Bbbk_{1},\dots,\Bbbk_{s}])=\zeta_{{\rm cyc}}([\Bbbk_{1},\dots,\Bbbk_{s}]).
\end{equation}
Thus Theorem \ref{thm:Cyclic_Integral_Series_identity} is proved.

\section{\label{sec:proof-of-CYC1}Proof of Theorem \ref{thm:CYC-1}}

\subsection{\label{subsec:inner-shuffle-product}Inner shuffle product}

We define the shuffle product $\shuffle:\mathfrak{h}\times\mathfrak{h}\to\mathfrak{h}$
by 
\[
1\shuffle w=w\shuffle1=w,
\]
\[
uw\shuffle u'w'=u(w\shuffle u'w')+u'(uw\shuffle w'),
\]
where $u,u'\in\{x,y\}$ and $w,w'\in\mathfrak{h}$. 
We define the inner shuffle product $\shaub:\mathfrak{h}\times\mathfrak{h}_{C}\to\mathfrak{h}_{C}$
by 
\[
w\shaub x=w\shaub y=0,
\]
\[
w\shaub uw'u'=u(w\shuffle w')u',
\]
where $u,u'\in\{x,y\}$ and $w,w'\in\mathfrak{h}$. Note that we have
$y\shaub w=y\shuffle w-yw-wy$ for $w\in\mathfrak{h}_{C}$. The following
lemma is a key property of $\shaub$. 
\begin{lem}
\label{lem:inner_shuffle_property}For $0<p<q<1$ , let $f_{p,q}:\mathfrak{h}x\oplus\mathfrak{h}y\to\mathbb{R}$
be a $\mathbb{Q}$-linear map defined by $f_{p,q}(x)=f_{p,q}(y)=0$
and 
\[
f_{t_{1},t_{k}}(u_{1}\cdots u_{k})=\int_{t_{1}<\cdots<t_{k}}a_{1}\cdots a_{k}dt_{2}\cdots dt_{k-1}
\]
for $k>1$, where $u_{1},\dots,u_{k}\in\{x,y\}$ and 
\[
a_{i}=\begin{cases}
\frac{1}{t_{i}} & u_{i}=x,\\
\frac{1}{1-t_{i}} & u_{i}=y.
\end{cases}
\]
Then we have 
\[
f_{p,q}(u_{1}\cdots u_{k}\shaub w)=f_{p,q}(w)\int_{p<t_{1}<\cdots<t_{k}<q}\prod_{i=1}^{k}a_{i}dt_{i},
\]
where $u_{1},\dots,u_{k}\in\{x,y\}$ and 
\[
a_{i}=\begin{cases}
\frac{1}{t_{i}} & u_{i}=x,\\
\frac{1}{1-t_{i}} & u_{i}=y.
\end{cases}
\]
\end{lem}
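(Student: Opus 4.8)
The plan is to reduce the identity to the iterated-integral description of $f_{p,q}$ and the recursive definitions of both $\shuffle$ and $\shaub$. First I would unwind the definition $u_1\cdots u_k\shaub w$: by the definition of $\shaub$, this is zero unless $w$ has length at least $2$, and if we write $w=v_1 w' v_\ell$ with $v_1,v_\ell\in\{x,y\}$ and $w'\in\mathfrak h$, then $u_1\cdots u_k\shaub w = v_1\bigl((u_1\cdots u_k)\shuffle w'\bigr)v_\ell$; here of course $u_1\cdots u_k\in\{x,y\}$ means each $u_i$ is a letter, so $u_1\cdots u_k$ is a word, and the shuffle $(u_1\cdots u_k)\shuffle w'$ is a sum of words, each of length $k+\ell-2$. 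Applying $f_{p,q}$ term-by-term, the left-hand side becomes a sum over these shuffle words of iterated integrals over the simplex $t_1<\cdots<t_{k+\ell}$ (with $t_1=p$, $t_{k+\ell}=q$ as the fixed endpoints coming from $v_1,v_\ell$), where the first slot carries the differential form dictated by $v_1$, the last by $v_\ell$, and the middle $k+\ell-2$ slots carry the forms of the shuffled word.

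The key observation is the standard fact that summing iterated integrals over a shuffle of two words equals the product of the two iterated integrals over their respective simplices. Concretely, I would invoke (or quickly reprove) the identity
\[
\int_{p<t_1<\cdots<t_{k+\ell-2}<q}\sum_{\sigma}(\text{shuffled forms})\,dt_1\cdots dt_{k+\ell-2}
=\Bigl(\int_{p<s_1<\cdots<s_k<q}\prod a_i\,ds_i\Bigr)\cdot\Bigl(\int_{p<\tau_1<\cdots<\tau_{\ell-2}<q}\prod b_j\,d\tau_j\Bigr),
\]
which is exactly the multiplicativity of the iterated-integral (Chen) pairing with respect to the shuffle product on the same interval $(p,q)$. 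This reduces $f_{p,q}(u_1\cdots u_k\shaub w)$ to $\bigl(\int_{p<t_1<\cdots<t_k<q}\prod a_i\,dt_i\bigr)$ times $\int_{p<\tau_1<\cdots<\tau_{\ell-2}<q}(\text{forms of }w')\,d\tau$, and the latter integral, once we reattach the endpoint forms of $v_1$ and $v_\ell$, is precisely $f_{p,q}(w)$ by definition. Matching these two pieces gives the claimed formula. The degenerate cases $w=x$, $w=y$ (length $1$) and the length-$0$ case are handled directly since both sides vanish (and for $k=0$ one checks the formula reads $f_{p,q}(w)=f_{p,q}(w)\cdot 1$ trivially, if that case is even admitted).

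The main obstacle, and the only place requiring care, is the shuffle-multiplicativity step: one must be sure that the endpoints $p$ and $q$ are genuinely \emph{fixed} and common to both factors, so that the two simplices $\{p<s_1<\cdots<s_k<q\}$ and $\{p<\tau_1<\cdots<\tau_{\ell-2}<q\}$ live in the same ambient interval and their "shuffle'' fills out $\{p<t_1<\cdots<t_{k+\ell-2}<q\}$ up to measure zero. This is why the hypothesis is stated for $\mathfrak h x\oplus\mathfrak h y$ rather than all of $\mathfrak h$: the leftmost and rightmost letters of $w$ supply the endpoint forms $a_1(t=p)$ and $a_k(t=q)$ that are not integrated, pinning the domain. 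I would prove the multiplicativity either by an induction on $k+\ell$ mirroring the recursive definition of $\shuffle$ (splitting on which word contributes the smallest variable $t_1$), or by citing the classical statement; the inductive proof is short and self-contained, so I would include it. Everything else is bookkeeping: tracking which slots get $1/t$ versus $1/(1-t)$, and confirming that reattaching the two fixed endpoint forms to the inner integral of $w'$ reconstitutes $f_{p,q}(w)$ exactly as defined.
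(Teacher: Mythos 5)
Your argument is correct and is exactly the (standard) argument the paper implicitly relies on: the paper states this lemma with only an illustrative example and no written proof, evidently regarding the reduction $u_1\cdots u_k\shaub v_1w'v_\ell=v_1\bigl((u_1\cdots u_k)\shuffle w'\bigr)v_\ell$ followed by shuffle-multiplicativity of iterated integrals with the fixed endpoints $t=p$ and $t=q$ as routine. Your handling of the pinned endpoint forms and of the degenerate length-one case matches the definitions, so nothing further is needed.
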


\begin{example}
When $k=2,u_{1}=x,u_{2}=x$ and $w=yx$, we have

\begin{align*}
f_{p,q}(x^{2}\shaub yx) & =\frac{1}{1-p}\cdot\frac{1}{q}\cdot\int_{p<t_{2}<t_{3}<q}\frac{dt_{2}}{t_{2}}\cdot\frac{dt_{3}}{t_{3}}\\
 & =f_{p,q}(yx)\cdot\int_{p<t_{2}<t_{3}<q}\frac{dt_{2}}{t_{2}}\cdot\frac{dt_{3}}{t_{3}}.
\end{align*}
\end{example}

\subsection{Proof of Theorem \ref{thm:CYC-1}}

For $s\leq s'$, we put 
\[
E(s,s',t)=\begin{cases}
1 & s\leq t\leq s',\\
0 & \text{otherwise}.
\end{cases}
\]
By Lemma \ref{lem:inner_shuffle_property}, we have 
\[
Z_{\mathrm{cyc}}(u_{1}\otimes\cdots\otimes u_{i-1}\otimes(y\shaub u_{i})\otimes u_{i+1}\otimes\cdots\otimes u_{s})=\int_{D}\left(\prod_{c=1}^{s}\prod_{j=1}^{k_{c}}a_{c,j}dt_{c,j}\right)\int_{0<t<1}E(t_{i,1},t_{i,k_{i}},t)\frac{dt}{1-t}.
\]
and 
\[
Z_{\mathrm{cyc}}(u_{1}\otimes\cdots\otimes u_{i}\otimes y\otimes u_{i+1}\otimes\cdots\otimes u_{s})=\int_{D}\left(\prod_{c=1}^{s}\prod_{j=1}^{k_{c}}a_{c,j}dt_{c,j}\right)\int_{0<t<1}E(t_{(i+1\bmod s),1},t_{i,k_{i}},t)\frac{dt}{1-t},
\]
where $(i+1\bmod s)$ means $i+1$ for $1\leq i<s$ and $1$ for $i=s$.
Thus Theorem \ref{thm:CYC-1} follows from 
\[
\sum_{i=1}^{s}E(t_{i,1},t_{i,k_{i}},t)=\sum_{i=1}^{s}E(t_{(i+1\bmod s),1},t_{i,k_{i}},t)\ \ \ (t\in(0,1)).
\]

\section{\label{sec:Proof-of-CYC2}Proof of Theorem \ref{thm:CYC-2}}

\subsection{\label{subsec:inner-harmonic-product}Inner harmonic product}

We define the harmonic product $*:\mathfrak{h}^{1}\times\mathfrak{h}^{1}\to\mathfrak{h}^{1}$
by 
\[
z_{k_{1}}\cdots z_{k_{r}}*z_{l_{1}}\cdots z_{l_{s}}:=\sum_{d=\max(r,s)}^{r+s}\sum_{\substack{f:\{1,\dots,r\}\to\{1,\dots,d\}\\
g:\{1,\dots,s\}\to\{1,\dots,d\}\\
f,g:\text{strictly increasing}\\
{\mathrm{Im}}f\cup{\mathrm{Im}}g=\{1,\dots,d\}
}
}z_{m_{1}}\cdots z_{m_{d}},
\]
where 
\[
 m_{i}=
 \begin{cases}
  k_{f^{-1}(i)} & i\in{\mathrm{Im}}f\setminus{\mathrm{Im}}g,\\
  l_{g^{-1}(i)} & i\in{\mathrm{Im}}g\setminus{\mathrm{Im}}f,\\
  k_{f^{-1}(i)}+l_{g^{-1}(i)} & i\in{\mathrm{Im}}f\cap{\mathrm{Im}}g.
 \end{cases}
\]
Similarly, we define an inner harmonic product $\harub:\mathfrak{h}^{1}\times\mathfrak{h}_{C}^{1}\to\mathfrak{h}_{C}^{1}$ by 
\[
 z_{k_{1}}\cdots z_{k_{r}}\harub z_{l_{1}}\cdots z_{l_{s}}
 :=\sum_{d=\max(r,s)}^{r+s}\sum_{\substack{f:\{1,\dots,r\}\to\{1,\dots,d\}\\
 g:\{1,\dots,s\}\to\{1,\dots,d\}\\
 f,g:\text{strictly increasing}\\
 {\mathrm{Im}}f\cup{\mathrm{Im}}g=\{1,\dots,d\}\\
 \forall i,\ g(1)\leq f(i)\leq g(s)
 }
 }z_{m_{1}}\cdots z_{m_{d}},
\]
where the definition of $m_{i}$ is same as the one in the previous definition. 
Note that we have $z_{k}\harub w=z_{k}*w-z_{k}w-wz_{k}$
for $w\in\mathfrak{h}_{C}^{1}$
, and $u_1\harub (u_2\harub u_3)=(u_1*u_2)\harub u_3$ for $u_1\in \mathfrak{h}^{1}$ and $u_2, u_3\in \mathfrak{h}_{C}^{1}$.
The following lemma is a key property of $\harub$. 
\begin{lem}
 \label{lem:inner_harmonic_property}
 For $0<p\leq q$, let $f_{p,q}:\mathfrak{h}_{C}^{1}\to\mathbb{R}$ be a $\mathbb{Q}$-linear map defined by 
 \[
  f_{p,q}(z_{k_{1}}\cdots z_{k_{r}})=\sum_{\substack{n_{1}<\cdots<n_{r}\\
  n_{1}=p\\
  n_{r}=q
  }
  }n_{1}^{-k_{1}}\cdots n_{r}^{-k_{r}}.
 \]
 Then we have 
 \[
  f_{p,q}(z_{k_{1}}\cdots z_{k_{r}}\harub w)=f_{p,q}(w)\sum_{p\leq n_{1}<\cdots<n_{r}\leq q}n_{1}^{-k_{1}}\cdots n_{r}^{-k_{r}}.
 \]
\end{lem}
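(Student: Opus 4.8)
\textbf{Proof proposal for Lemma \ref{lem:inner_harmonic_property}.}

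The plan is to reduce the statement to a bookkeeping identity about the harmonic product, exploiting the associativity-type relation $u_1\harub(u_2\harub u_3)=(u_1*u_2)\harub u_3$ already noted in the text together with a direct unfolding of the definition of $\harub$. First I would observe that $f_{p,q}$ is, up to the boundary constraints $n_1=p$, $n_r=q$, nothing but the defining series of a multiple zeta-type value restricted to a fixed first and last index; in particular $f_{p,q}(w)$ for $w=z_{l_1}\cdots z_{l_s}\in\mathfrak{h}_C^1$ vanishes unless $s\ge 1$ and $l_1\ge 2$ is allowed only up to the boundary, so the relevant content is purely the combinatorics of interleaving. The right-hand side sum $\sum_{p\le n_1<\cdots<n_r\le q}n_1^{-k_1}\cdots n_r^{-k_r}$ I would expand by splitting on which of the $n_i$ coincide with the summation nodes of $f_{p,q}(w)$; writing $w=z_{l_1}\cdots z_{l_s}$ whose nodes are $p=q_1<\cdots<q_s=q$, each term of the product $f_{p,q}(w)\cdot(\text{that sum})$ is indexed by a choice of a strictly increasing $q$-chain in $[p,q]$ together with a strictly increasing $n$-chain in $[p,q]$, and the merged multiset of nodes, recording coincidences, is exactly the data $(d,f,g)$ appearing in the definition of $\harub$ — with the crucial side condition $g(1)\le f(i)\le g(s)$ for all $i$ encoding precisely that every $n_i$ lies in the closed interval $[p,q]=[q_1,q_s]$.

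Concretely, the key steps in order are: (i) fix $w=z_{l_1}\cdots z_{l_s}$ and expand both sides as sums over tuples of positive integers with the stated ordering and boundary conditions; (ii) on the right, for a term $n_1^{-k_1}\cdots n_r^{-k_r}$ from the product sum and $q_1^{-l_1}\cdots q_s^{-l_s}$ from $f_{p,q}(w)$, form the sorted union of $\{n_i\}$ and $\{q_j\}$ as an ordered tuple $m_1<\cdots<m_d$ with $\max(r,s)\le d\le r+s$, and define $f,g$ to be the inclusions of the $k$-positions and $l$-positions respectively; (iii) check that this assignment is a bijection between (pairs of chains as above) and (triples $(d,f,g)$ satisfying the constraints of $\harub$, together with a choice of the underlying chain $m_1<\cdots<m_d$ in $\mathbb{Z}_{\ge1}$ with $m_1=p$, $m_d=q$); (iv) verify the exponents match, i.e. $m_i^{-\text{(exponent)}}$ equals $n^{-k}\cdot q^{-l}$ distributed according to the three cases in the definition of $m_i$, which is immediate; (v) conclude that the right-hand side equals $f_{p,q}(z_{k_1}\cdots z_{k_r}\harub w)$ by the very definition of $f_{p,q}$ applied term-by-term to $z_{k_1}\cdots z_{k_r}\harub w$.

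The main obstacle I expect is verifying in step (iii) that the side condition $g(1)\le f(i)\le g(s)$ for all $i$ is \emph{exactly} equivalent to the constraint $p\le n_1$ and $n_r\le q$ on the right-hand side, rather than merely implied by it; one must be careful that $f(i)=g(1)$ (resp. $f(i)=g(s)$) is allowed, corresponding to $n_1=p$ (resp. $n_r=q$) coinciding with a node of $w$, which is why $\harub$ lands in $\mathfrak{h}_C^1$ and why the outer sum on the right uses $p\le n_1$ and $n_r\le q$ with equality permitted. A secondary point is that $w\in\mathfrak{h}_C^1$ means $s\ge1$, so $g$ has nonempty image and $g(1),g(s)$ make sense; the degenerate interplay when $r$ or $s$ is small should be checked but is routine. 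Once the bijection in (iii) is pinned down, steps (iv) and (v) are purely formal, and the lemma follows.
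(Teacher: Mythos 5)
Your argument is correct: the term-by-term bijection between pairs of chains (an $n$-chain in $[p,q]$ and a $q$-chain with endpoints $p,q$) and the data $(d,f,g)$ plus a merged chain $m_1<\cdots<m_d$ with $m_1=p$, $m_d=q$ is exactly what makes the identity work, and your reading of the side condition $g(1)\leq f(i)\leq g(s)$ (equivalently, $g(1)=1$ and $g(s)=d$, so the merged chain keeps the boundary nodes of $w$ while the $n_i$ merely stay in $[p,q]$) is right. The paper gives no formal proof of this lemma, only the example with $r=1$, $w=z_{l_1}z_{l_2}$, which is precisely the special case of your computation, so your proposal supplies the intended general argument rather than a different route.
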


\begin{example}
When $r=1$ and $w=z_{l_{1}}z_{l_{2}}$, we have

\begin{align*}
f_{p,q}(z_{k}\harub z_{l_{1}}z_{l_{2}}) & =f_{p,q}(z_{l_{1}+k}z_{l_{2}})+f_{p,q}(z_{l_{1}}z_{k}z_{l_{2}})+f_{p,q}(z_{l_{1}}z_{l_{2}+k})\\
 & =\sum_{p=n_{1}<n_{2}=q}\frac{1}{n_{1}^{l_{1}+k}n_{2}^{l_{2}}}+\sum_{p=n_{1}<n_{2}<n_{3}=q}\frac{1}{n_{1}^{l_{1}}n_{2}^{k}n_{3}^{l_{2}}}+\sum_{p=n_{1}<n_{2}=q}\frac{1}{n_{1}^{l_{1}}n_{2}^{l_{2}+k}}\\
 & =\sum_{p=n_{1}<n_{2}=q}\frac{1}{n_{1}^{l_{1}}n_{2}^{l_{2}}}\sum_{p\leq n\leq q}\frac{1}{n^{k}}\\
 & =f_{p,q}(z_{l_{1}}z_{l_{2}})\sum_{p\leq n\leq q}\frac{1}{n^{k}}.
\end{align*}
\end{example}

\subsection{Proof of Theorem \ref{thm:CYC-2}}

For $p\leq q$, we put 
\[
E(p,q,n)=\begin{cases}
1 & p\leq n\leq q,\\
0 & \text{otherwise}.
\end{cases}
\]

By Lemma \ref{lem:inner_harmonic_property}, we have 
\[
 Z_\mathrm{cyc}(u_{1}\otimes\cdots\otimes u_{i-1}\otimes(z_{k}\harub u_{i})\otimes u_{i+1}\otimes\cdots\otimes u_{s})
 =\sum_{(n_{1,1},\dots,n_{s,r_{s}})\in S}
 \left(\prod_{i=1}^{s}\prod_{j=1}^{r_{i}}\frac{1}{n_{i,j}^{k_{i,j}}}\right)\sum_{n\in\mathbb{Z}_{\geq 1}}\frac{E(n_{i,1},n_{i,r_{i}},n)}{n^{k}}
\]
and 
\[
 Z_{\mathrm{cyc}}(u_{1}\otimes\cdots\otimes u_{i}\otimes z_{k}\otimes u_{i+1}\otimes\cdots\otimes u_{s})
 =\sum_{(n_{1,1},\dots,n_{s,r_{s}})\in S}
 \left(\prod_{i=1}^{s}\prod_{j=1}^{r_{i}}\frac{1}{n_{i,j}^{k_{i,j}}}\right)\sum_{n\in\mathbb{Z}_{\geq 1}}\frac{E(n_{(i+1\bmod s),1},n_{i,r_{i}},n)}{n^{k}},
\]
where $(i+1\bmod s)$ means $i+1$ for $1\leq i<s$ and $1$ for $i=s$.
Thus Theorem \ref{thm:CYC-2} follows from 
\[
 \sum_{i=1}^{s}E(n_{i,1},n_{i,r_{i}},n)=\sum_{i=1}^{s}E(n_{(i+1\bmod s),1},n_{i,r_{i}},n)\ \ \ (n\in\mathbb{Z}_{\geq1}).
\]

\section{\label{sec:applications-of-Theorems}Applications of Theorems \ref{thm:Cyclic_Integral_Series_identity}
and \ref{thm:CYC-1}}
\subsection{Proof of cyclic sum formula for MZSVs}
In this section, we give an alternative proof of the cyclic sum formula
for MZSVs in \cite{Ohno_Wakabayashi} as an a application of Theorem \ref{thm:CYC-1}. 
\begin{lem}
For $k_{1},\dots,k_{s},l\in\mathbb{Z}_{\geq1}$ such that $k_{s}>1$,
we have
\begin{align*}
 \zeta_\mathrm{cyc}([(k_{1}),\dots,(k_{s})]) & =\zeta(k_{1}+\cdots+k_{s}),\\
 \zeta_\mathrm{cyc}([(l,k_{s}),(k_{s-1}),\dots,(k_{1})]) & =\zeta^{\star}(l,k_{1},\dots,k_{s})-\zeta(l+k_{1}+\cdots+k_{s}).
\end{align*}
\end{lem}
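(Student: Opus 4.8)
The plan is to compute both cyclic multiple zeta values directly from the series expression \eqref{eq:series_exp}, unfolding the cyclic inequality chain into a pair of ordinary (in)equalities that can be recognized as MZVs and MZSVs. For the first identity, take $s$ singleton blocks $\Bbbk_i = (k_i)$ with $r_i = 1$ for all $i$. The defining set $S$ then consists of tuples $(n_1,\dots,n_s)$ with $n_1 \geq n_2 \geq \cdots \geq n_s \geq n_1$, where each ``$<\cdots<$'' within a block is vacuous since $r_i = 1$. The closed cycle of $\geq$'s forces $n_1 = n_2 = \cdots = n_s =: n$, so $\zeta_\mathrm{cyc}([(k_1),\dots,(k_s)]) = \sum_{n \geq 1} n^{-(k_1 + \cdots + k_s)} = \zeta(k_1 + \cdots + k_s)$. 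One must check admissibility: since $k_s > 1$, the last block $(k_s)$ is admissible, so $\Bbbk$ is an admissible cyclic index and the left-hand side is defined; the sum on the right converges for the same reason.

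For the second identity, set $s+1$ blocks: the first block is $(l, k_s)$ (so $r_1 = 2$) and the remaining $s-1$ blocks are the singletons $(k_{s-1}), \dots, (k_1)$. Writing the first-block variables as $n_{1,1} < n_{1,2}$ and relabelling the singleton variables as $m_{s-1}, \dots, m_1$, the set $S$ imposes
\[
n_{1,1} < n_{1,2} \geq m_{s-1} \geq \cdots \geq m_1 \geq n_{1,1}.
\]
The chain $n_{1,2} \geq m_{s-1} \geq \cdots \geq m_1 \geq n_{1,1}$ combined with $n_{1,1} < n_{1,2}$ describes exactly the tuples with $n_{1,1} \leq m_1 \leq m_2 \leq \cdots \leq m_{s-1} \leq n_{1,2}$ and $n_{1,1} < n_{1,2}$. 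Renaming $n_{1,1} = n_0$ and $n_{1,2} = n_s$, the summand is $n_0^{-l} n_s^{-k_s} \prod_{j=1}^{s-1} m_j^{-k_j}$, summed over $n_0 \leq m_1 \leq \cdots \leq m_{s-1} \leq n_s$ with $n_0 < n_s$. Dropping the constraint $n_0 < n_s$ gives the full weakly-increasing sum $\sum_{n_0 \leq m_1 \leq \cdots \leq m_{s-1} \leq n_s} = \zeta^\star(l, k_1, \dots, k_{s-1}, k_s)$; the removed part is the diagonal $n_0 = m_1 = \cdots = m_{s-1} = n_s = n$, contributing $\sum_{n\geq 1} n^{-(l + k_1 + \cdots + k_s)} = \zeta(l + k_1 + \cdots + k_s)$. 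Hence $\zeta_\mathrm{cyc}([(l,k_s),(k_{s-1}),\dots,(k_1)]) = \zeta^\star(l, k_1, \dots, k_s) - \zeta(l + k_1 + \cdots + k_s)$.

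The only genuinely delicate point is the bookkeeping of the index order: the cyclic index lists blocks as $[(l,k_s),(k_{s-1}),\dots,(k_1)]$, i.e. in \emph{reversed} order after the first block, so one must be careful that after unfolding the cyclic $\geq$-chain and reading the variables in increasing order of size, the exponents line up as $(l, k_1, k_2, \dots, k_s)$ rather than $(l, k_{s-1}, \dots, k_1, k_s)$ — this is precisely what the reversal in the statement arranges, and verifying it is a matter of tracking subscripts through $S$. I would also note that the hypothesis $k_s > 1$ guarantees admissibility of $(l, k_s)$ and convergence of both $\zeta^\star(l,k_1,\dots,k_s)$ and $\zeta(l+k_1+\cdots+k_s)$, so every term in the claimed identity is well-defined. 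No appeal to Theorem~\ref{thm:Cyclic_Integral_Series_identity} is needed here; the argument is purely combinatorial manipulation of the summation region.
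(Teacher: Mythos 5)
Your proposal is correct and is exactly the computation the paper has in mind: its proof consists of the single sentence that the lemma is ``an immediate consequence of the series expression,'' and your unfolding of the cyclic chain of inequalities (forcing all variables equal in the first case, and reducing to a weakly increasing chain minus its diagonal in the second) is precisely that consequence spelled out, with the subscript reversal handled correctly.
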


\begin{proof}
This is an immediate consequence of the series expression of $\zeta_\mathrm{cyc}$. 
\end{proof}
Fix $k_{1},\dots,k_{s}\in\mathbb{Z}_{\geq1}$ such that $k_{1}+\cdots+k_{s}>s$.
Put $k=k_{1}+\cdots+k_{s}$. 
By Theorem \ref{thm:CYC-1}, we have
\begin{equation}
\sum_{i=1}^{s} Z_\mathrm{cyc}(z_{k_{s}}\otimes\cdots\otimes z_{k_{i+1}}\otimes(y\shaub z_{k_{i}})\otimes z_{k_{i-1}}\otimes\cdots\otimes z_{k_{1}})
=
\sum_{i=1}^{s} Z_\mathrm{cyc}(z_{k_{s}}\otimes\cdots\otimes z_{k_{i+1}}\otimes y\otimes z_{k_{i}}\otimes\cdots\otimes z_{k_{1}}).\label{eq:from_CYC-1}
\end{equation}

By the previous lemma, we have 
\begin{equation}
 Z_\mathrm{cyc}(z_{k_{s}}\otimes\cdots\otimes z_{k_{i+1}}\otimes y\otimes z_{k_{i}}\otimes\cdots\otimes z_{k_{1}})
 =\zeta(k+1)\label{eq:cyc_zeta_only_single}
\end{equation}
for $1\leq i\leq s$. 
Since 
\[
 y\shaub z_{l}=\sum_{j=1}^{l-1}z_{l-j}z_{j+1}
\]
for $l\in\mathbb{Z}_{\geq1}$, we have 
\begin{equation}
 Z_\mathrm{cyc}(z_{k_{s}}\otimes\cdots\otimes z_{k_{i+1}}\otimes(y\shaub z_{k_{i}})\otimes z_{k_{i-1}}\otimes\cdots\otimes z_{k_{1}})
 =\sum_{j=1}^{k_{i}-1}\zeta^{\star}(k_{i}-j,k_{i+1},\dots,k_{s},k_{1},\dots,k_{i-1},j+1)-(k_{i}-1)\zeta(k+1)\label{eq:cyc_zeta_almost_only_single}
\end{equation}
for $1\leq i\leq s$. 
From (\ref{eq:from_CYC-1}), (\ref{eq:cyc_zeta_only_single}) and (\ref{eq:cyc_zeta_almost_only_single}), we have 
\[
 \sum_{i=1}^{s}\sum_{j=1}^{k_{i}-1}\zeta^{\star}(k_{i}-j,k_{i+1},\dots,k_{s},k_{1},\dots,k_{i-1},j+1)=k\zeta(k+1).
\]
This is the cyclic sum formula for MZSVs.

\subsection{Algebraic preliminary}
For $m\ge1$, we define derivation maps $\partial_{m}$ and $\delta_{m}$
on $\mathfrak{h}$ by 
\[
\delta_{m}(x)=0,\ \delta_{m}(y)=yx^{m-1}(x+y),
\]
\[
\partial_{m}(x)=y(x+y)^{m-1}x,\ \partial_{m}(y)=-y(x+y)^{m-1}x.
\]
Note that 
\begin{equation}
\delta_{m}(w)=z_{m}*w-wz_{m}=z_{m}\harub w+z_{m}w\label{eq:delta_m}
\end{equation}
for $w\in \mathfrak{h}_{C}^{1}$. 
We denote by $[A,B]$ a commutator $AB-BA$. 
\begin{lem}
 \label{lem:two_derivation}For $m\geq1$, we have 
 \[
  \sum_{j=1}^{m-1}[\delta_{j},\partial_{m-j}]=(m-1)(\partial_{m}+\delta_{m}).
 \]
\end{lem}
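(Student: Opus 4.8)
The plan is to prove Lemma \ref{lem:two_derivation} by a direct computation of how both sides act on the generators $x$ and $y$, using the fact that both $\delta_j$, $\partial_{m-j}$ and their commutators are derivations, so it suffices to check the identity on $x$ and on $y$ separately. First I would record the values $\partial_m(x)=y(x+y)^{m-1}x=-\partial_m(y)$ and $\delta_m(x)=0$, $\delta_m(y)=yx^{m-1}(x+y)$. To organize the bookkeeping it is convenient to work in $\mathfrak{h}=\mathbb{Q}\langle x,y\rangle$ with the auxiliary letter $w:=x+y$; then $\partial_m(x)=ywx^{m-1}$... actually $y(x+y)^{m-1}x$, and one should be careful that $\delta_m$ and $\partial_m$ do not commute with the substitution $x\mapsto x$, $y\mapsto w$, so I will instead just expand everything in the $x,y$ basis when needed.

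The key step is the evaluation on $x$. Since $\delta_j(x)=0$, we get $[\delta_j,\partial_{m-j}](x)=\delta_j(\partial_{m-j}(x))=\delta_j\big(y(x+y)^{m-j-1}x\big)$. Because $\delta_j$ is a derivation killing $x$, it acts only on the single letter $y$ appearing in $y(x+y)^{m-j-1}x$ coming from each occurrence of $y$; so I would expand $y(x+y)^{m-j-1}x=\sum y x^{a_1} y x^{a_2}\cdots$ and apply $\delta_j$ to each $y$. Summing over $j=1,\dots,m-1$ and comparing with $(m-1)(\partial_m+\delta_m)(x)=(m-1)\delta_m(x)+(m-1)\partial_m(x)$... but $\delta_m(x)=0$, so the right-hand side on $x$ is just $(m-1)\,y(x+y)^{m-1}x$. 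The claim on $x$ therefore reduces to the combinatorial identity
\[
\sum_{j=1}^{m-1}\delta_j\big(y(x+y)^{m-j-1}x\big)=(m-1)\,y(x+y)^{m-1}x,
\]
which I expect to follow by expanding $(x+y)^{m-j-1}=\sum_{I}$ over words in $x,y$ and tracking the insertion of the block $x^{j-1}(x+y)$ at each $y$; a telescoping / counting argument shows each monomial on the right is hit exactly $m-1$ times.

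For the evaluation on $y$ I would use $[\delta_j,\partial_{m-j}](y)=\delta_j(\partial_{m-j}(y))-\partial_{m-j}(\delta_j(y))=-\delta_j\big(y(x+y)^{m-j-1}x\big)-\partial_{m-j}\big(yx^{j-1}(x+y)\big)$. The first summand, after summing over $j$, is $-(m-1)\,y(x+y)^{m-1}x=(m-1)\partial_m(y)$ by the identity just established. So it remains to show $\sum_{j=1}^{m-1}\partial_{m-j}\big(yx^{j-1}(x+y)\big)=-(m-1)\delta_m(y)=-(m-1)yx^{m-1}(x+y)$, i.e. that the $\partial$-contribution reconstructs $(m-1)$ copies of $yx^{m-1}(x+y)$ with the correct sign. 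Here I would exploit that $\partial_{m-j}$ sends $x\mapsto y(x+y)^{m-j-1}x$ and $y\mapsto -y(x+y)^{m-j-1}x$, so on a word $yx^{j-1}(x+y)$ it produces a signed sum over which letter is hit; the signs from the leading $y$ versus the trailing $x+y$ must conspire to cancel all terms except the ones rebuilding $yx^{m-1}(x+y)$.

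The main obstacle is precisely this last cancellation: the two combinatorial identities (one for the $x$-value, one controlling the $\partial$-part of the $y$-value) are where all the content lies, and keeping the signs and the insertion-positions straight is delicate. A clean way to handle both at once is to encode $\delta_m$ and $\partial_m$ via generating-series/automorphism arguments — e.g. pass to the known fact (from \cite{ihara_kaneko_zagier_2006}) that $\sum_m \partial_m u^m$ and $\sum_m \delta_m u^m$ are related to conjugation by explicit automorphisms of $\mathfrak{h}$, so that $\exp$ of the relevant derivations intertwine them — and then the bracket identity $\sum_{j}[\delta_j,\partial_{m-j}]=(m-1)(\partial_m+\delta_m)$ becomes the degree-$m$ part of a single identity between formal vector fields, which I would verify on the two generators. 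I expect the derivation-on-generators approach to be the most robust and I would present it that way, relegating the monomial expansions to a short explicit check.
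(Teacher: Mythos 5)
Your reduction to checking the identity on the generators $x$ and $y$ is sound (both sides are derivations), and the two combinatorial identities you isolate,
\[
\sum_{j=1}^{m-1}\delta_j\bigl(y(x+y)^{m-j-1}x\bigr)=(m-1)\,y(x+y)^{m-1}x
\quad\text{and}\quad
\sum_{j=1}^{m-1}\partial_{m-j}\bigl(yx^{j-1}(x+y)\bigr)=-(m-1)\,yx^{m-1}(x+y),
\]
are indeed true and equivalent to the lemma. But you do not prove either of them: you say the first should ``follow by a telescoping / counting argument'' and that in the second ``the signs must conspire to cancel,'' and the alternative generating-series route is likewise only gestured at. Since you yourself note that this is ``where all the content lies,'' the proposal as written is a plan rather than a proof. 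There is also a circularity hazard lurking in the second identity: because $\partial_{m-j}(x+y)=0$, it is equivalent to $\sum_{j=1}^{m-1}\partial_{m-j}(z_j)=-(m-1)z_m$, which is exactly Lemma \ref{lem:der_z_sum} of the paper --- and the paper \emph{derives} that from Lemma \ref{lem:two_derivation} by evaluating both sides at $x+y$. So you cannot quote that statement; you would have to give it an independent direct proof, which is a nontrivial signed monomial count.

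The paper's own proof sidesteps all of this. It introduces an auxiliary derivation $s$ with $s(x)=x^2$ and $s(x+y)=(x+y)^2$, checks the ladder relations $[s,\delta_m]=m\delta_{m+1}$ and $[s,\partial_m]=m\partial_{m+1}$ (each of which is a one-line computation on generators), verifies the lemma directly for $m\leq 2$, and then runs induction on $m$ via the Jacobi identity applied to $s$, $\delta_p$, $\partial_q$. This converts the hard monomial bookkeeping into pure Lie-algebra manipulation. If you want to salvage your approach, you must either supply complete combinatorial proofs of the two displayed identities or switch to an inductive scheme of this kind.
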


\begin{proof}
 Put $z=x+y$. we define a derivation $s:\mathfrak{h}\to\mathfrak{h}$ by 
 \[
  s(x)=x^{2},\ s(z)=z^{2}.
 \]
 Then we can easily check that 
 \begin{align*}
  [s,\delta_{m}]&=m\delta_{m+1}, \\
  [s,\partial_{m}]&=m\partial_{m+1}.
 \end{align*}
 We prove the lemma by induction on $m$. We can check the case $m\leq2$
 by direct calculation. Take $m\geq3$ and assume that 
 \[
  \sum_{\substack{p+q=m-1\\
  1\leq p,q\leq m-2
  }
  }[\delta_{p},\partial_{q}]=(m-2)(\partial_{m-1}+\delta_{m-1}).
 \]
 From Jacobi identity, we have 
 \begin{align*}
  0= & \sum_{\substack{p+q=m-1\\
  1\leq p,q\leq m-2
  }
  }\left([s,[\delta_{p},\partial_{q}]]+[\partial_{q},[s,\delta_{p}]]+[\delta_{p},[\partial_{q},s]]\right)\\
  = & (m-2)(m-1)(\partial_{m}+\delta_{m})\\
  & -\sum_{\substack{p+q=m-1\\
  1\leq p,q\leq m-2
  }
  }p[\delta_{p+1},\partial_{q}]-\sum_{\substack{p+q=m-1\\
  1\leq p,q\leq m-2
  }
  }q[\delta_{p},\partial_{q+1}]\\
  = & (m-2)(m-1)(\partial_{m}+\delta_{m})-(m-2)\sum_{\substack{p+q=m\\
  1\leq p,q\leq m-1
  }
  }[\delta_{p},\partial_{q}].
 \end{align*}
 Since $m>2$, we obtain 
 \[
  \sum_{\substack{p+q=m\\
  1\leq p,q\leq m-1
  }
  }[\delta_{p},\partial_{q}]=(m-1)(\partial_{m}+\delta_{m}).
 \]
 Thus the claim is proved. 
\end{proof}

\subsection{Proof of derivation relation}

In this subsection, we give an alternative proof of the derivation
relation in \cite{ihara_kaneko_zagier_2006}. We put $\{1\}^{m}:=y^{m}$
and 
\[
\{1\}_{\star}^{m}:=\begin{cases}
1 & m=0,\\
y(x+y)^{m-1} & m>0.
\end{cases}
\]
\begin{lem}
 For $m\geq1$, we have
 \begin{align}
  m\{1\}_{\star}^{m}
  &=\sum_{i=1}^{m}z_{i}*\{1\}_{\star}^{m-i},\label{eq1}\\
  m\{1\}^{m}
  &=\sum_{i=1}^{m}(-1)^{i-1}z_{i}*\{1\}^{m-i},\label{eq2}\\
  \sum_{i=0}^m(-1)^{i}\{1\}_{\star}^{m-i}*\{1\}^{i}
  &=
  \begin{cases}
   1 & m=0,\\
   0 & m>0.\label{eq3}
  \end{cases}
 \end{align}
\end{lem}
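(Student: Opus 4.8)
The plan is to pass to generating functions. Extend $*$ to the formal power series ring $\mathfrak{h}^{1}[[t]]$ by $\mathbb{Q}[[t]]$-bilinearity; then $(\mathfrak{h}^{1}[[t]],*)$ is a commutative, associative algebra and $\partial_{t}$ is a derivation with respect to $*$. Put $T(t):=\sum_{i\ge1}z_{i}t^{i}=yt(1-xt)^{-1}$, $F(t):=\sum_{m\ge0}\{1\}^{m}t^{m}=(1-yt)^{-1}$, and $F_{\star}(t):=\sum_{m\ge0}\{1\}_{\star}^{m}t^{m}$. The last one equals $(1-xt)(1-(x+y)t)^{-1}$, since the coefficient of $t^{m}$ in that product is $(x+y)^{m}-x(x+y)^{m-1}=y(x+y)^{m-1}=\{1\}_{\star}^{m}$ for $m\ge1$. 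Multiplying \eqref{eq1}, \eqref{eq2}, \eqref{eq3} by $t^{m}$ and summing over $m$, these become equivalent, respectively, to $t\,\partial_{t}F_{\star}(t)=T(t)*F_{\star}(t)$, to $t\,\partial_{t}F(t)=\bigl(-T(-t)\bigr)*F(t)$, and to $F_{\star}(t)*F(-t)=1$.

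For \eqref{eq1} and \eqref{eq2} I would derive functional equations. The only fact about $*$ needed is the recursion $z_{a}*(z_{k}V)=z_{a}z_{k}V+z_{k}(z_{a}*V)+z_{a+k}V$ for $a,k\ge1$ and $V\in\mathfrak{h}^{1}$, immediate from the definition of $*$. Using $\{1\}_{\star}^{m}=\sum_{k=1}^{m}z_{k}\{1\}_{\star}^{m-k}$, applying the above to each $z_{a}*\bigl(z_{k}\{1\}_{\star}^{m-k}\bigr)$, and summing over $a$ and $m$ with weights $t^{a}$, $t^{m}$ --- the ``merging'' terms assemble into $\sum_{a\ge1}z_{a+k}t^{a}=z_{k}\cdot xt(1-xt)^{-1}$, and $1+xt(1-xt)^{-1}=(1-xt)^{-1}$ --- one finds that $G:=T(t)*F_{\star}(t)$ satisfies $\bigl(1-T(t)\bigr)G=T(t)(1-xt)^{-1}F_{\star}(t)$; the same computation with $\{1\}^{m}=z_{1}\{1\}^{m-1}$ shows that $\tilde{G}:=\bigl(-T(-t)\bigr)*F(t)$ satisfies $(1-yt)\tilde{G}=yt\,F(t)$. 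The latter at once gives $\tilde{G}=(1-yt)^{-1}yt(1-yt)^{-1}=yt(1-yt)^{-2}=t\,\partial_{t}F(t)$, which is \eqref{eq2}. For the former, $1-T(t)=(1-(x+y)t)(1-xt)^{-1}$, so together with $F_{\star}(t)=(1-xt)(1-(x+y)t)^{-1}$ one gets $G=(1-xt)(1-(x+y)t)^{-1}\,yt\,(1-xt)^{-1}\,(1-(x+y)t)^{-1}$, and the resolvent identity --- $(1-Bt)^{-1}(B-A)(1-At)^{-1}=(1-At)^{-1}(B-A)(1-Bt)^{-1}$, both sides being $t^{-1}\bigl[(1-Bt)^{-1}-(1-At)^{-1}\bigr]$ since $(1-At)-(1-Bt)=(B-A)t$ --- applied with $B=x+y$, $A=x$ lets us rewrite $(1-(x+y)t)^{-1}\,yt\,(1-xt)^{-1}=(1-xt)^{-1}\,yt\,(1-(x+y)t)^{-1}$, which collapses $G$ to $yt(1-(x+y)t)^{-2}=t\,\partial_{t}F_{\star}(t)$, i.e.\ \eqref{eq1}.

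For \eqref{eq3} I would differentiate. Write $\Psi(t):=F_{\star}(t)*F(-t)$. The generating-function forms of \eqref{eq1} and \eqref{eq2} give $\partial_{t}F_{\star}(t)=\bigl(t^{-1}T(t)\bigr)*F_{\star}(t)$ and, by the chain rule, $\partial_{t}F(-t)=-\bigl(t^{-1}T(t)\bigr)*F(-t)$ (here $-(-t)^{-1}T(t)=t^{-1}T(t)$). Since $*$ is commutative and associative and $\partial_{t}$ is a $*$-derivation, $\partial_{t}\Psi(t)=\bigl((t^{-1}T(t))*F_{\star}(t)\bigr)*F(-t)-F_{\star}(t)*\bigl((t^{-1}T(t))*F(-t)\bigr)=0$, hence $\Psi(t)=\Psi(0)=\{1\}_{\star}^{0}*\{1\}^{0}=1$, which is \eqref{eq3}.

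I expect the only genuinely computational step to be the derivation of the functional equation $\bigl(1-T(t)\bigr)G=T(t)(1-xt)^{-1}F_{\star}(t)$: one has to bookkeep carefully the three families of terms produced when the harmonic-product recursion meets the geometric series $F_{\star}(t)$, in particular the merging contributions $\sum_{a\ge1}z_{a+k}t^{a}$. Everything afterwards is formal manipulation in $\mathfrak{h}^{1}[[t]]$ --- the resolvent identity carries all the non-commutative weight for \eqref{eq1}, \eqref{eq2} is immediate because only the commuting letter $y$ enters, and \eqref{eq3} is a two-line derivative computation.
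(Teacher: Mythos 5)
Your proof is correct, but it takes a genuinely different route from the paper's. The paper proves (\ref{eq1}) by a direct combinatorial reindexing of the harmonic-product expansion of $z_i*\{1\}_{\star}^{m-i}$ (splitting into ``merging'' and ``insertion'' terms and counting), proves (\ref{eq2}) by writing out $z_i*\{1\}^{m-i}$ for each $i$ and observing that the alternating sum telescopes, and does \emph{not} prove (\ref{eq3}) at all --- it is quoted from Kawashima's Proposition 7.1. You instead work in $\mathfrak{h}[[t]]$: the identities become $t\,\partial_t F_{\star}=T*F_{\star}$, $t\,\partial_t F=(-T(-t))*F$ and $F_{\star}(t)*F(-t)=1$, the first two follow from the single recursion $z_a*(z_kV)=z_az_kV+z_k(z_a*V)+z_{a+k}V$ combined with the concatenation identities $F_{\star}=1+T F_{\star}$, $F=1+ytF$ and the noncommutative resolvent identity, and the third follows from the first two because $\partial_t$ is a $*$-derivation and $*$ is commutative and associative. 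I checked the one step you flag as computational: summing the three families of terms indeed gives $(1-T(t))G=T(t)(1-xt)^{-1}F_{\star}(t)$ (the merging terms contribute $T(t)\,xt(1-xt)^{-1}F_{\star}(t)$ and $1+xt(1-xt)^{-1}=(1-xt)^{-1}$), and likewise $(1-yt)\tilde G=ytF(t)$ for (\ref{eq2}). What your approach buys is uniformity and self-containedness --- in particular a short internal proof of (\ref{eq3}) where the paper outsources it --- at the cost of importing commutativity and associativity of $*$ (standard, from Hoffman) and of working with inverses in $\mathfrak{h}[[t]]$ that leave $\mathfrak{h}^1[[t]]$; this is harmless since those elements only ever enter concatenation products, but you should say so explicitly.
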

\begin{proof} 
 We prove (\ref{eq1}), first.
 \begin{align}
  &\sum_{i=1}^{m}z_{i}*\{1\}_{\star}^{m-i} \nonumber\\
  &=\sum^{m-1}_{i=1}\sum^{m-i}_{d=1}
  \sum_{\substack{k_1+\cdots+k_d=m-i\\k_i\ge1}}z_i\ast z_{k_1}\dots z_{k_d}+z_m\nonumber\\
  &=\sum^{m-1}_{i=1}\sum^{m-i}_{d=1}
  \left(
  \sum^d_{j=1}
  \sum_{\substack{k_1+\cdots+k_d=m-i\\k_l\ge1}}
  z_{k_1}\dots z_{k_j+i}\dots z_{k_d}
  +\sum^d_{j=0}
  \sum_{\substack{k_1+\cdots+k_d=m-i\\k_l\ge1}}
  z_{k_1}\dots z_{k_j}z_iz_{k_{j+1}}\dots z_{k_d}
  \right)
  +z_m \nonumber\\
  &=\sum^{m-1}_{d=1}\sum^{m-d}_{i=1}
  \sum^d_{j=1}
  \sum_{\substack{k_1+\cdots+k_d=m-i\\k_l\ge1}}
  z_{k_1}\dots z_{k_j+i}\dots z_{k_d} 
  +\sum^{m-1}_{d=1}\sum^{m-d}_{i=1}
  \sum^d_{j=0}
  \sum_{\substack{k_1+\cdots+k_d=m-i\\k_l\ge1}}
  z_{k_1}\dots z_{k_j}z_iz_{k_{j+1}}\dots z_{k_d} +z_m \label{eq:5.4}
 \end{align}
 Here we have
 \begin{align*}
  \text{The first term of (\ref{eq:5.4})}
  &=\sum^{m-1}_{d=1}\sum^d_{j=1}
  \sum^{m-d}_{i=1}
  \sum_{\substack{k_1+\cdots+k_d=m-i\\k_l\ge1}}  
  z_{k_1}\dots z_{k_j+i}\dots z_{k_d}\\
  &=\sum^{m-1}_{d=1}\sum^d_{j=1}
  \sum_{\substack{k_1+\cdots+k_d=m\\k_l\ge1}}
  (k_j-1)z_{k_1}\dots z_{k_d}\\
  &=\sum^{m-1}_{d=1}
  \sum_{\substack{k_1+\cdots+k_d=m\\k_l\ge1}} 
  (m-d)z_{k_1}\dots z_{k_d},\\
  \text{The second term of (\ref{eq:5.4})}
  &=\sum^{m-1}_{d=1}\sum^d_{j=0}\sum^{m-d}_{i=1}
  \sum_{\substack{k_1+\cdots+k_d=m-i\\k_l\ge1}}
  z_{k_1}\dots z_{k_j}z_iz_{k_{j+1}}\dots z_{k_d}\\
  &=\sum^{m-1}_{d=1}
  \sum_{\substack{k_1+\cdots+k_{d+1}=m\\k_l\ge1}}
  (d+1)z_{k_1}\dots z_{k_{d+1}}\\
  &=\sum^{m}_{d=2}
  \sum_{\substack{k_1+\cdots+k_d=m\\k_l\ge1}}
  dz_{k_1}\dots z_{k_d}\\
  &=\sum^{m}_{d=1}
  \sum_{\substack{k_1+\cdots+k_d=m\\k_l\ge1}}
  dz_{k_1}\dots z_{k_d}-z_m.
 \end{align*}
 Then we get (\ref{eq1}).

 We prove (\ref{eq2}), next. 
 We note that
 \begin{align*}
  z_1*z_1^{m-1}&=mz_1^m+\sum_{i=1}^{m-1}z_1^{i-1}z_2z_1^{m-1-i},\\
  z_2*z_1^{m-2}&=\sum_{i=1}^{m-1}z_1^{i-1}z_2z_1^{m-1-i}+\sum_{i=1}^{m-2}z_1^{i-1}z_3z_1^{m-2-i},\\
  &\dots\\
  z_{m-1}*z_1&=\sum_{i=1}^2z_1^{i-1}z_{m-1}z_1^{2-i}+z_m,\\
  z_m*1&=z_m
 \end{align*}
 holds. 
 By taking alternating sum, we have (\ref{eq2}).

 The equation (\ref{eq3}) follows from \cite[Proposition 7.1]{kawashima_2009}. 
\end{proof}

We define a $\mathbb{Q}$-linear map $Z:\mathfrak{h}^{0}\to\mathbb{R}$
by 
\[
Z(z_{k_{1}}\cdots z_{k_{r}}):=\zeta(k_{1},\dots,k_{r}).
\]
By Lemma \ref{lem:inner_harmonic_property} and the series expression of $\zeta_\mathrm{cyc}$, we have 
\[
 Z_\mathrm{cyc}(w\otimes\overbrace{y\otimes\cdots\otimes y}^{m\ {\rm times}})
 =Z(\{1\}_{\star}^{m}\harub w)
\]
for $w\in\mathfrak{h}_{C}^{0}$. 
By Theorem \ref{thm:CYC-1}, we
have 
\[
Z_\mathrm{cyc}(y\shaub w\otimes\overbrace{y\otimes\cdots\otimes y}^{m\ {\rm times}})-(m+1)Z_{\mathrm{cyc}}(w\otimes\overbrace{y\otimes\cdots\otimes y}^{m+1\ {\rm times}})=0.
\]
Thus, we get the following corollary of Theorem \ref{thm:CYC-1}. 
\begin{cor} \label{cor:Fwm}
 For $w\in\mathfrak{h}_{C}^{0}$ and $m\ge0$,
 we have 
 \[
  Z(F(w,m))=0,
 \] 
 where $F(w,m):=\{1\}_{\star}^{m}\harub(y\shaub w)-(m+1)\{1\}_{\star}^{m+1}\harub w$.
\end{cor}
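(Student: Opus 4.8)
The plan is to read off $Z(F(w,m))=0$ from the special case of Theorem~\ref{thm:CYC-1} in which the cyclic tensor is $w\otimes y^{\otimes m}$, using the translation identity $Z_\mathrm{cyc}(v\otimes y^{\otimes n})=Z(\{1\}_{\star}^{n}\harub v)$ for $v\in\mathfrak{h}_{C}^{0}$ established just before the statement. Before applying anything I would record two easy bookkeeping facts. Since $\mathfrak{h}_{C}^{0}=y\mathfrak{h}x$ consists of combinations of admissible words of length $\ge2$, we have $w\ne y$, hence $w\otimes y^{\otimes m}\in\mathfrak{h}^{\mathrm{cyc}}$ for all $m\ge0$; and from $w\shaub uw'u'=u(w\shuffle w')u'$ one gets $y\shaub w\in y\mathfrak{h}x=\mathfrak{h}_{C}^{0}$, so the translation identity is legitimately applicable both to $v=y\shaub w$ (with $n=m$) and to $v=w$ (with $n=m+1$).

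Next I would apply Theorem~\ref{thm:CYC-1} to $(u_{1},\dots,u_{s})=(w,\underbrace{y,\dots,y}_{m})$, so $s=m+1$. On the left-hand side the summand indexed by $i$ vanishes whenever $u_{i}=y$, because $y\shaub y=0$; only $i=1$ survives, giving $Z_\mathrm{cyc}\bigl((y\shaub w)\otimes y^{\otimes m}\bigr)$. On the right-hand side each of the $m+1$ summands merely inserts a further $y$ into the trailing string of $y$'s (or immediately after $w$), so each equals $Z_\mathrm{cyc}\bigl(w\otimes y^{\otimes(m+1)}\bigr)$; hence the right-hand side is $(m+1)\,Z_\mathrm{cyc}\bigl(w\otimes y^{\otimes(m+1)}\bigr)$. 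This reproduces the displayed identity preceding the corollary.

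Finally I would feed both sides of this identity through $Z_\mathrm{cyc}(v\otimes y^{\otimes n})=Z(\{1\}_{\star}^{n}\harub v)$, obtaining $Z\bigl(\{1\}_{\star}^{m}\harub(y\shaub w)\bigr)=(m+1)\,Z\bigl(\{1\}_{\star}^{m+1}\harub w\bigr)$, and then use $\mathbb{Q}$-linearity of $Z$ to rearrange this into $Z(F(w,m))=0$. There is no genuine obstacle here: all the substance is already contained in Theorem~\ref{thm:CYC-1} and in the translation identity, and the only points needing a moment's attention are the two membership checks above (so that $\harub$ and $Z$ are applied in their stated domains) together with the degenerate case $m=0$, where $\{1\}_{\star}^{0}\harub v=v$ and the right-hand side consists of the single term $Z_\mathrm{cyc}(w\otimes y)$ — both of which are covered by the remarks already made.
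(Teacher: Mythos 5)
Your proposal is correct and follows essentially the same route as the paper: specialize Theorem~\ref{thm:CYC-1} to $w\otimes y^{\otimes m}$ (where only the $i=1$ term survives on the left because $y\shaub y=0$, and all $m+1$ terms on the right coincide), then translate via $Z_\mathrm{cyc}(v\otimes y^{\otimes n})=Z(\{1\}_{\star}^{n}\harub v)$. The extra domain checks you record are sound and consistent with what the paper leaves implicit.
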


In fact, Corollary \ref{cor:Fwm} is essentially derivation relation.
More precisely, the following theorem holds. 
\begin{thm} \label{lem:der_z_1}
 For $w\in\mathfrak{h}_{C}^{0}$ and $m\geq1$, we have 
 \[
  \sum_{i=1}^{m}(-1)^{i-1}F(y^{i-1}\harub w,m-i)=\partial_{m}(w).
 \]
\end{thm}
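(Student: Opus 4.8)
The plan is to unwind the definition of $F(w,m)$ on both sides and reduce the identity to the derivation-map commutator relation of Lemma \ref{lem:two_derivation}, using the translation \eqref{eq:delta_m} between $\harub$ and $\delta_m$ together with the algebraic identities \eqref{eq1}, \eqref{eq2}, \eqref{eq3} of the previous lemma. First I would rewrite $F(y^{i-1}\harub w,m-i)$ purely in terms of harmonic operations: by definition $F(v,n)=\{1\}_\star^n\harub(y\shaub v)-(n+1)\{1\}_\star^{n+1}\harub v$, and since $\harub$ is associative in the sense $u_1\harub(u_2\harub u_3)=(u_1*u_2)\harub u_3$, the term $\{1\}_\star^{m-i}\harub(y^{i-1}\harub w)$ collapses to $(\{1\}_\star^{m-i}*y^{i-1})\harub w$ once I move the inner $\shaub$ past things; the nontrivial point is handling the $y\shaub(\cdot)$, where I would use $y\shaub v=y\shuffle v-yv-vy$ only formally and instead prefer to keep everything on the harmonic side, exploiting that for the specific operators appearing here $y\shaub$ interacts with $\harub$ controllably. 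The cleaner route: observe that $\{1\}_\star^n\harub(y\shaub w)$ and $\{1\}_\star^{n+1}\harub w$ can each be expressed through $\delta$'s and $\partial$'s acting on $w$, because $y\shaub w$ is, up to the correction terms $yw+wy$, the value $y\shuffle w$, and $\{1\}_\star^n$-harmonic-products are built from the $z_i*(\cdot)$ operators which by \eqref{eq:delta_m} are $\delta_i+(\text{right multiplication by }z_i)$.

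The main computation is then: substitute these expressions into $\sum_{i=1}^m(-1)^{i-1}F(y^{i-1}\harub w,m-i)$, use \eqref{eq1} (which says $n\{1\}_\star^n=\sum_{j=1}^n z_j*\{1\}_\star^{n-j}$) to simplify the coefficient $(m-i+1)$ multiplying $\{1\}_\star^{m-i+1}\harub w$, use \eqref{eq2} to handle the alternating sum $\sum_i(-1)^{i-1}(\cdots)y^{i-1}$-type factors, and use \eqref{eq3} to kill the cross terms between $\{1\}_\star$ and $\{1\}^i=y^i$. After these cancellations I expect the double sum to telescope into $\sum_{j=1}^{m-1}[\delta_j,\partial_{m-j}](w)$ minus a correction of the form $(m-1)(\partial_m+\delta_m)(w)$ plus a leftover single term equal to $\partial_m(w)$; by Lemma \ref{lem:two_derivation} the first two pieces cancel, leaving exactly $\partial_m(w)$. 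So the skeleton is: (i) rewrite $F$ via $\delta$/$\partial$; (ii) plug in and regroup the alternating sum; (iii) apply \eqref{eq1}–\eqref{eq3} to collapse the $\{1\}_\star$/$\{1\}$ bookkeeping; (iv) recognize the commutator sum and invoke Lemma \ref{lem:two_derivation}.

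The hard part will be step (i)–(ii): getting the bookkeeping exactly right when the inner-harmonic product $y^{i-1}\harub w$ is itself fed back into another inner-harmonic/inner-shuffle product, since $\harub$ is only a one-sided (left) action and the associativity $u_1\harub(u_2\harub u_3)=(u_1*u_2)\harub u_3$ must be applied with care about which argument lies in $\mathfrak{h}^1$ versus $\mathfrak{h}_C^1$. I would manage this by first proving an auxiliary identity expressing $\{1\}_\star^n\harub(y\shaub(v\harub w))$ and $\{1\}_\star^{n}\harub(v\harub w)$ in terms of $(\{1\}_\star^n*v)\harub w$ and $(\{1\}_\star^n*v*y)\harub w$-type expressions, so that every term in the final sum has the uniform shape $(\text{element of }\mathfrak{h}^1)\harub w$; then the whole identity becomes an assertion in $\mathfrak{h}^1$ about the coefficient words, which is precisely where Lemma \ref{lem:two_derivation} and the three algebraic identities apply. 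Once everything is phrased as an equality of elements of $\mathfrak{h}^1$ acting on $w$, verifying it is a finite manipulation with no analytic input.
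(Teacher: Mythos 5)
Your proposal assembles the right ingredients (Lemma \ref{lem:two_derivation}, the translation \eqref{eq:delta_m}, and the identities \eqref{eq1}--\eqref{eq3}), but it is missing the structural idea that makes the paper's argument work: induction on $m$. You hope that a direct expansion of $\sum_{i}(-1)^{i-1}F(y^{i-1}\harub w,m-i)$ will ``telescope into $\sum_{j}[\delta_j,\partial_{m-j}](w)$,'' but no $\partial$-operator can appear from unwinding $F$, since $F$ is built only from $\{1\}_\star^{n}$, $\harub$ and $y\shaub$. In the paper the commutators $[\delta_j,\partial_{m-j}]$ enter only because one writes $G_m(w):=\sum_i(-1)^{i-1}F(y^{i-1}\harub w,m-i)$, assumes $G_{m-j}=\partial_{m-j}$ for $j\geq 1$ by the induction hypothesis, and then evaluates $\sum_j[\delta_j,G_{m-j}](w)$ two ways: once as $(m-1)(\partial_m+\delta_m)(w)$ via Lemma \ref{lem:two_derivation}, and once by expanding $\delta_j$ as $z_j\harub(\cdot)+z_j\cdot$ and reshuffling the resulting double sums with \eqref{eq1} and \eqref{eq2} to recover $(m-1)G_m(w)$ plus correction terms. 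The induction hypothesis is also needed a second time, in the piece $\sum_j\bigl(z_jG_{m-j}(w)-G_{m-j}(z_jw)\bigr)$, where one must know that $G_{m-j}=\partial_{m-j}$ is a \emph{derivation} in order to reduce it to $-\sum_j\partial_{m-j}(z_j)w=(m-1)z_m w$ (this uses the further auxiliary identity $\sum_{j=1}^{m-1}\partial_{m-j}(z_j)=-(m-1)z_m$, which your sketch does not anticipate). Without the induction, none of these steps is available.

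The specific fallback you propose for the ``hard part'' also does not work: there is no auxiliary identity expressing $\{1\}_\star^{n}\harub\bigl(y\shaub(v\harub w)\bigr)$ as a combination of terms of the uniform shape $(\text{element of }\mathfrak{h}^1)\harub w$. The operation $y\shaub$ is of shuffle type and does not distribute over the harmonic structure; e.g.\ already $y\shaub(z_k\harub w)$ cannot be rewritten as $u\harub w$ for any $u\in\mathfrak{h}^1$. Only the \emph{alternating sum} over $i$ of the mixed terms $\{1\}_\star^{m-i}\harub\bigl(y\shaub(\{1\}^{i-1}\harub w)\bigr)$ acquires a clean meaning, and that is exactly what the inductive commutator computation extracts. (The one place where your reduction does go through is the second half of $F$: by associativity, $\{1\}_\star^{m-i+1}\harub(\{1\}^{i-1}\harub w)=(\{1\}_\star^{m-i+1}*\{1\}^{i-1})\harub w$, and the alternating sum collapses to $-z_m\harub w$ by \eqref{eq1} and \eqref{eq3}; this matches the paper's treatment of $G_m''$.) To repair the proposal, set up the induction on $m$, verify the base case $m=1$ directly, and run the commutator computation with $G_{m-j}$ in place of $\partial_{m-j}$.
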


We prove this theorem in the rest of this section. 
We prepare some lemmas. 
\begin{lem}
 \label{lem:der_z_sum}For $m\geq1$, we have 
 \[
  \sum_{j=1}^{m-1}\partial_{m-j}(z_{j})=-(m-1)z_{m}.
 \]
\end{lem}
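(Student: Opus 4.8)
The plan is to compute $\partial_{m-j}(z_j)$ explicitly for each $j$ and then sum over $j$. Recall $z_j = yx^{j-1}$, and $\partial_n$ is the derivation with $\partial_n(x) = y(x+y)^{n-1}x$ and $\partial_n(y) = -y(x+y)^{n-1}x$. Writing $z = x+y$ as in the proof of Lemma~\ref{lem:two_derivation}, note that $\partial_n(x) = -\partial_n(y) = yz^{n-1}x$, and in particular $\partial_n(z) = \partial_n(x) + \partial_n(y) = 0$. This last fact is the crucial simplification: $z$ is killed by every $\partial_n$.

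First I would rewrite $z_j = yx^{j-1}$ in terms of $y$ and $z$. Since $x = z - y$ does not obviously help, it is cleaner to keep $x$ but exploit the Leibniz rule together with $\partial_n(z)=0$. Applying the derivation $\partial_{m-j}$ to $z_j = y\cdot x^{j-1}$ gives
\[
\partial_{m-j}(z_j) = \partial_{m-j}(y)\,x^{j-1} + \sum_{a=1}^{j-1} y\,x^{a-1}\,\partial_{m-j}(x)\,x^{j-1-a} = -yz^{m-j-1}x\,x^{j-1} + \sum_{a=1}^{j-1} y\,x^{a-1}\,yz^{m-j-1}x\,x^{j-1-a}.
\]
So $\partial_{m-j}(z_j) = -yz^{m-j-1}x^{j} + \sum_{a=1}^{j-1} yx^{a-1}yz^{m-j-1}x^{j-a}$. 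Now I would sum this over $j$ from $1$ to $m-1$. The first (diagonal) terms contribute $-\sum_{j=1}^{m-1} yz^{m-j-1}x^{j}$, and the double sum contributes $\sum_{j=1}^{m-1}\sum_{a=1}^{j-1} yx^{a-1}yz^{m-j-1}x^{j-a}$.

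The main work is then a telescoping/reindexing argument to show the double sum equals $\sum_{j=1}^{m-1} yz^{m-j-1}x^{j} - (m-1)z_m$, so that after cancellation only $-(m-1)z_m = -(m-1)yx^{m-1}$ survives. I expect this bookkeeping step to be the main obstacle: one must carefully track the word $yx^{a-1}yz^{m-j-1}x^{j-a}$, expand each power $z^{m-j-1} = (x+y)^{m-j-1}$ into words in $x,y$, and match terms against the expansion of $yz^{m-j-1}x^j$. An alternative, possibly slicker route is to recognize the sum $\sum_{j=1}^{m-1}\partial_{m-j}(z_j)$ as $\partial$ applied to something via a generating-function identity; indeed, since $\partial_n(z)=0$ and $\partial_n(x)=yz^{n-1}x$, one can package the $\partial_n$'s into a single operator acting on $\sum_j z_j t^{m-j}$ and read off the coefficient. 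Either way, the key structural input is $\partial_n(z)=0$, and the rest is a finite combinatorial identity in $\mathfrak{h}$ that I would verify by induction on $m$ if the direct expansion proves unwieldy.
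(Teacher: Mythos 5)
Your route is genuinely different from the paper's. The paper obtains Lemma~\ref{lem:der_z_sum} as a short consequence of Lemma~\ref{lem:two_derivation}: applying the operator identity $\sum_{j}[\delta_j,\partial_{m-j}]=(m-1)(\partial_m+\delta_m)$ to the single element $x+y$ and using $\partial_n(x+y)=0$, $\delta_j(x+y)=z_j(x+y)$ and $(\partial_m+\delta_m)(x+y)=z_m(x+y)$, one gets $-\sum_j\partial_{m-j}(z_j)\,(x+y)=(m-1)z_m(x+y)$ and then cancels the right factor $x+y$ (right multiplication by $x+y$ is injective on $\mathfrak{h}$). You instead compute $\partial_{m-j}(z_j)$ from scratch by the Leibniz rule; that computation is correct, and so is the word identity to which you reduce everything, namely (with $z=x+y$)
\[
\sum_{j=1}^{m-1}\sum_{a=1}^{j-1} yx^{a-1}y\,z^{m-j-1}x^{j-a}\;=\;\sum_{j=1}^{m-1} y\,z^{m-j-1}x^{j}\;-\;(m-1)\,yx^{m-1}.
\]
The problem is that in your approach this identity \emph{is} the entire content of the lemma, and you explicitly defer it as the ``main obstacle'' rather than proving it; as written, the proposal is a correct plan but not yet a proof.

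The gap is closable without induction. Expand every $z^{m-j-1}=(x+y)^{m-j-1}$ into words; both sides then lie in the span of words $v$ of length $m$ that begin with $y$ and end with $x$. If such a $v$ has exactly $e\geq 1$ trailing $x$'s, then $v$ decomposes as $y\,w\,x^{j}$ with $w\in\{x,y\}^{m-1-j}$ precisely for $j=1,\dots,e$ (note $e\leq m-1$ automatically, since $v$ starts with $y$), so its coefficient in $\sum_{j}yz^{m-j-1}x^{j}$ is $e$. On the other hand, $v$ decomposes as $yx^{a-1}y\,w\,x^{j-a}$ only if $v$ contains a second letter $y$, which then pins down $a$, and the exponent $j-a$ ranges over $1,\dots,e$; so its coefficient in the double sum is again $e$, except for $v=yx^{m-1}$ where it is $0$. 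The difference of the two sides is therefore $(m-1)\,yx^{m-1}$, which is exactly the claimed identity. With this counting argument supplied, your proof is complete and self-contained, and it has the mild advantage of not invoking Lemma~\ref{lem:two_derivation}; the paper's proof is shorter only because that lemma has already been established by its own induction.
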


\begin{proof}
 By Lemma \ref{lem:two_derivation}, we have 
 \[
  \sum_{j=1}^{m-1}[\delta_{j},\partial_{m-j}](x+y)=(m-1)(\partial_{m}+\delta_{m})(x+y).
 \]
 Since 
 \begin{align*}
  \sum_{j=1}^{m-1}[\delta_{j},\partial_{m-j}](x+y)= & -\sum_{j=1}^{m-1}\partial_{m-j}(yx^{j-1}(x+y))\\
    = & -\sum_{j=1}^{m-1}\partial_{m-j}(z_{j})(x+y)
 \end{align*}
 and 
 \[
  (\partial_{m}+\delta_{m})(x+y)=z_{m}(x+y),
 \]
 we have 
 \[
  \sum_{j=1}^{m-1}\partial_{m-j}(z_{j})=-(m-1)z_{m}. \qedhere
 \]
\end{proof}
\begin{lem} \label{lem:weighted_sum}
 For $m\geq1$, we have 
 \[
  \sum_{i=0}^{m-1}(-1)^{i}(m-i)\{1\}_{\star}^{m-i}*\{1\}^{i}=z_{m}.
 \]
\end{lem}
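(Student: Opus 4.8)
The plan is to deduce this from the three identities \eqref{eq1}, \eqref{eq2}, \eqref{eq3} of the previous lemma by a generating-function / summation-by-parts argument, since the left-hand side is a weighted convolution of the sequences $\{1\}_\star^\bullet$ and $\{1\}^\bullet$ under the harmonic product. First I would rewrite the coefficient $m-i$ as $(m-i)\cdot 1$ and split it, using \eqref{eq1} in the form $(m-i)\{1\}_\star^{m-i}=\sum_{a=1}^{m-i}z_a*\{1\}_\star^{m-i-a}$, so that
\[
 \sum_{i=0}^{m-1}(-1)^{i}(m-i)\{1\}_\star^{m-i}*\{1\}^{i}
 =\sum_{i=0}^{m-1}(-1)^{i}\sum_{a=1}^{m-i}z_a*\{1\}_\star^{m-i-a}*\{1\}^{i}.
\]
(Here I use associativity and commutativity of $*$ on $\mathfrak{h}^1$, which is standard.) Then I would interchange the order of summation, fixing $a$ and summing over $i$, so that the inner sum becomes $z_a*\bigl(\sum_{i=0}^{m-a}(-1)^i\{1\}_\star^{m-a-i}*\{1\}^{i}\bigr)$. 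By \eqref{eq3} the inner alternating convolution vanishes unless $m-a=0$, i.e. $a=m$, in which case it equals $1$. Hence only the term $a=m$ survives, giving $z_m*1=z_m$, which is exactly the claim.

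The one point that needs care — and which I expect to be the main (minor) obstacle — is bookkeeping the ranges of the double sum after the interchange: in the original sum $i$ runs from $0$ to $m-1$ and $a$ from $1$ to $m-i$, so after swapping, $a$ runs from $1$ to $m$ and $i$ from $0$ to $m-a$; one must check that this is precisely the range appearing in \eqref{eq3} with $m$ replaced by $m-a$, including the $i=m-a$ endpoint, so that the cited identity applies verbatim. A secondary point is that \eqref{eq3} is stated as an identity in $\mathfrak{h}^1$ about $\{1\}_\star^{m-i}*\{1\}^{i}$, whereas here these words appear convolved against $z_a$; associativity of $*$ lets us pull $z_a$ outside before invoking \eqref{eq3}, so no new input is needed.

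Alternatively, if one prefers to avoid the splitting of $(m-i)$ via \eqref{eq1}, one can argue purely with generating functions: set $A(T)=\sum_{m\ge0}\{1\}_\star^m T^m$ and $B(T)=\sum_{m\ge0}\{1\}^m(-T)^m$ in the formal power series ring $\mathfrak{h}^1[[T]]$ with multiplication induced by $*$; then \eqref{eq3} says $A(T)*B(T)=1$, \eqref{eq1} says $T A'(T)=\bigl(\sum_{i\ge1}z_i T^i\bigr)*A(T)$, and the desired identity is the coefficient of $T^m$ in $\bigl(TA'(T)\bigr)*B(T)=\bigl(\sum_{i\ge1}z_i T^i\bigr)*A(T)*B(T)=\sum_{i\ge1}z_i T^i$. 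This is the same computation packaged more compactly, and either presentation works; I would write it out in the first, elementary form to keep the exposition self-contained.
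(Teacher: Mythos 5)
Your proposal is correct and follows essentially the same route as the paper: expand $(m-i)\{1\}_{\star}^{m-i}$ via \eqref{eq1}, interchange the order of summation, and collapse the inner alternating convolution with \eqref{eq3}, leaving only the $a=m$ term $z_m*1=z_m$. The range bookkeeping you flag works out exactly as you describe, and the generating-function repackaging is a valid but unnecessary alternative.
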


\begin{proof}
 It follows from the following calculation: 
 \begin{align*}
  & \sum_{i=0}^{m}(-1)^{i}(m-i)\{1\}_{\star}^{m-i}*\{1\}^{i}\\
  = & \sum_{i=0}^{m}(-1)^{i}\left(\sum_{j=1}^{m-i}z_{j}*\{1\}{}_{\star}^{m-i-j}\right)*\{1\}^{i}\ \ \ \ \text{(by (\ref{eq1}))}\\
  = & \sum_{j=1}^{m}z_{j}*\sum_{i=0}^{m-j}(-1)^{i}\{1\}{}_{\star}^{m-j-i}*\{1\}^{i}\\
  = & \sum_{j=1}^{m}z_{j}*
  \begin{cases}
   1 & j=m,\\
   0 & j\neq m
  \end{cases}\qquad \qquad \qquad \qquad \ \ \text{(by (\ref{eq3}))}\\
  = & z_{m}. \qedhere
 \end{align*}
\end{proof}
\begin{proof}[Proof of Theorem \ref{lem:der_z_1}]
Put 
\[
G_{m}(w)=\sum_{i=1}^{m}(-1)^{i-1}F(y^{i-1}\harub w,m-i).
\]
By the definition, we have 
\[
G_{m}(w)=G_{m}'(w)+G_{m}''(w),
\]
where 
\[
G_{m}'(w)=\sum_{i=1}^{m}(-1)^{i-1}\{1\}_{\star}^{m-i}\harub(y\shaub(\{1\}^{i-1}\harub w))
\]
and 
\begin{align*}
G_{m}''(w)= & -\sum_{i=1}^{m}(-1)^{i-1}(m-i+1)\{1\}_{\star}^{m-i+1}\harub(\{1\}^{i-1}\harub w)\\
= & -\left(\sum_{i=1}^{m}(-1)^{i-1}(m-i+1)\{1\}_{\star}^{m-i+1}*\{1\}^{i-1}\right)\harub w\\
= & -z_{m}\harub w.
\end{align*}
Here the last equality follows from Lemma \ref{lem:weighted_sum}.
Thus we have 
\[
G_{m}(w)=\sum_{i=1}^{m}(-1)^{i-1}\{1\}_{\star}^{m-i}\harub(y\shaub(\{1\}^{i-1}\harub w))-z_{m}\harub w.
\]
Now we prove $G_{m}(w)=\partial_{m}(w)$ by induction on $m$. 
The case $m=1$ is obvious. 
For $m\geq2$, we assume that $G_{m-j}(w)=\partial_{m-j}(w)$ for all $1\leq j\leq m-1$. 
By Lemma \ref{lem:two_derivation}, we have
\begin{align}
(m-1)(\partial_{m}+\delta_{m})(w)= & \sum_{j=1}^{m-1}(\delta_{j}\partial_{m-j}-\partial_{m-j}\delta_{j})(w)\nonumber \\
= & \sum_{j=1}^{m-1}(\delta_{j}G_{m-j}-G_{m-j}\delta_{j})(w)\nonumber \\
= & \sum_{j=1}^{m-1}\left(z_{j}\harub G_{m-j}(w)+z_{j}G_{m-j}(w)-G_{m-j}(z_{j}\harub w)-G_{m-j}(z_{j}w)\right).\label{eq:e1}
\end{align}
Here the last equality follows from (\ref{eq:delta_m}). 
Now we have 
\begin{align}
&\sum_{j=1}^{m-1}\left(z_{j}\harub G_{m-j}(w)-G_{m-j}(z_{j}\harub w)\right)\nonumber \\
= & \sum_{j=1}^{m-1}\sum_{i=1}^{m-j}(-1)^{i-1}(z_{j}*\{1\}_{\star}^{m-i-j})\harub(y\shaub(\{1\}^{i-1}\harub w))\nonumber \\
 & -\sum_{j=1}^{m-1}\sum_{i=1}^{m-j}(-1)^{i-1}\{1\}_{\star}^{m-i-j}\harub(y\shaub((z_{j}*\{1\}^{i-1})\harub w))\nonumber \\
 & -\sum_{j=1}^{m-1}z_{j}\harub(z_{m-j}\harub w)+\sum_{j=1}^{m-1}z_{m-j}\harub(z_{j}\harub w)\nonumber \\
= & \sum_{i=1}^{m-1}\sum_{j=1}^{m-i}(-1)^{i-1}(z_j*\{1\}_{\star}^{m-i-j})\harub(y\shaub(\{1\}^{i-1}\harub w))\nonumber \\
 & +\sum_{k=2}^m\sum_{j=1}^{k-1}(-1)^{k-1}\{1\}_{\star}^{m-k}\harub(y\shaub(((-1)^{j-1}z_j*\{1\}^{k-j-1})\harub w))\ \ \ \ (k:=i+j)\nonumber \\
= & \sum_{i=1}^{m-1}(-1)^{i-1}(m-i)\{1\}_{\star}^{m-i}\harub(y\shaub(\{1\}^{i-1}\harub w))\nonumber \\
 & +\sum_{k=2}^{m}(-1)^{k-1}(k-1)\{1\}_{\star}^{m-k}\harub(y\shaub(\{1\}^{k-1}\harub w))\qquad\qquad\qquad\ \ (\text{by (\ref{eq1}), (\ref{eq2})})\nonumber \\
= & (m-1)\sum_{i=1}^{m-1}(-1)^{i-1}\{1\}_{\star}^{m-i}\harub(y\shaub(\{1\}^{i-1}\harub w))\nonumber \\
= & (m-1)G_{m}(w)+(m-1)z_{m}\harub w.\label{eq:e2}
\end{align}
We also have 
\begin{align}
 \sum_{j=1}^{m-1}\left(z_{j}G_{m-j}(w)-G_{m-j}(z_{j}w)\right)
 = & \sum_{j=1}^{m-1}\left(z_{j}\partial_{m-j}(w)-\partial_{m-j} (z_{j}w)\right)\nonumber \\
 = & -\sum_{j=1}^{m-1}\partial_{m-j}(z_{j})w\nonumber \\
 = & (m-1)z_{m}w.\label{eq:e3}
\end{align}
Here the last equality follows from Lemma \ref{lem:der_z_sum}. 
From (\ref{eq:delta_m}), (\ref{eq:e1}), (\ref{eq:e2}), and (\ref{eq:e3}),
we have 
\begin{align*}
 (m-1)(\partial_{m}+\delta_{m})(w)= & (m-1)G_{m}(w)+(m-1)z_{m}\harub w+(m-1)z_{m}w\\
 =& (m-1)(G_{m}(w)+\delta_{m}(w)).
\end{align*}
Thus the claim $\partial_{m}(w)=G_{m}(w)$ is proved. 
\end{proof}

\subsection{Proof of sum formula}
Let $k>r>0$. 
Put $\Bbbk=[(k-r+1),\overbrace{(1),\dots,(1)}^{r-1}]$.
From the series expression (\ref{eq:series_exp}), we have
\begin{align*}
\zeta_\mathrm{cyc}(\Bbbk)=\zeta(k).
\end{align*}
On the other hand, from the integral expression  (\ref{eq:integral_exp}), we have
\begin{align*}
\zeta_\mathrm{cyc}(\Bbbk)
 = & \zeta(y^{r-1}\shaub z_{k-r+1})\\
 = & \sum_{\substack{k_{1}+\cdots+k_{r}=k\\
 k_{1},\dots,k_{r-1}\geq1,k_{r}\geq2.
 }
 }\zeta(k_{1},\dots,k_{r}).
\end{align*}
Then we get the sum formula:
\begin{align*}
\sum_{\substack{k_{1}+\cdots+k_{r}=k\\
 k_{1},\dots,k_{r-1}\geq1,k_{r}\geq2.
 }
 }\zeta(k_{1},\dots,k_{r})=\zeta(k).
\end{align*}

\section*{Acknowledgements}
This work was supported by JSPS KAKENHI Grant Numbers JP18J00982, JP18K13392.

\end{document}